\theoremstyle{plain}
\newtheorem{theorem}{Theorem}[section]
\newtheorem{proposition}[theorem]{Proposition}
\newtheorem{lemma}[theorem]{Lemma}
\newtheorem{corollary}[theorem]{Corollary}
\theoremstyle{definition}
\newcommand{\appsection}[1]{\let\oldthesection\thesection
\renewcommand{\thesection}{Appendix \oldthesection}
\section{#1}\let\thesection\oldthesection}
\newtheorem{definition}[theorem]{Definition}
\newtheorem{say}[theorem]{}
\newtheorem{remarks}[theorem]{Remarks}
\theoremstyle{remark}
\newtheorem{claim}{Claim}
\newtheorem{remark}[theorem]{Remark}
\def\Z{{\mathbb{Z}}}
\def\gp{\mathfrak{p}}
\def\I{{\mathcal{I}}}
\def\O{{\mathcal{O}}}
\def\X{{\mathcal{X}}}
\DeclareMathOperator{\Bl}{Bl}
\DeclareMathOperator{\Cl}{Cl}
\DeclareMathOperator{\Cox}{Cox}
\DeclareMathOperator{\HH}{H}
\DeclareMathOperator{\Mov}{Mov}
\DeclareMathOperator{\NN}{N}
\DeclareMathOperator{\NE}{\overline{NE}}
\DeclareMathOperator{\Nef}{Nef}
\DeclareMathOperator{\Pic}{Pic}
\DeclareMathOperator{\Proj}{Proj}
\DeclareMathOperator{\bProj}{\bold{Proj}}
\DeclareMathOperator{\Spec}{Spec}
\DeclareMathOperator{\ch}{char}
\DeclareMathOperator{\mult}{mult}
\DeclareMathOperator{\pr}{pr}
\newcommand{\QED}{\ifhmode\unskip\nobreak\fi\quad {\rm Q.E.D.}} 
\newcommand{\bA}{\mathbb A}
\newcommand{\bN}{\mathbb N}
\newcommand{\bP}{\mathbb P}
\newcommand{\bQ}{\mathbb Q}
\newcommand{\bR}{\mathbb R}
\newcommand{\bZ}{\mathbb Z}
\newcommand{\sD}{\mathcal D}
\newcommand{\cE}{\mathcal E}
\newcommand{\cF}{\mathcal F}
\newcommand{\cG}{\mathcal G}
\newcommand{\cI}{\mathcal I}
\newcommand{\cK}{\mathcal K}
\newcommand{\cL}{\mathcal L}
\newcommand{\cM}{\mathcal M}
\newcommand{\cO}{\mathcal O}
\newcommand{\cX}{\mathcal X}
\newcommand{\al}{\alpha}
\newcommand{\be}{\beta}
\newcommand{\ga}{\gamma}
\newcommand{\ra}{\rightarrow}
\newcommand{\dra}{\dashrightarrow}
\newcommand{\oM}{\overline{M}}
\newcommand{\oLM}{\overline{LM}}
\newcommand{\tLM}{\widetilde{LM}}
\newcounter{et}[section]
\def\cooltag{\tag{\arabic{section}.\arabic{et}}\addtocounter{et}{1}}
\begin{document}
\bibliographystyle{amsplain}
\title{$\oM_{0,n}$ is not a Mori Dream Space}

\author{Ana-Maria Castravet}
\author{Jenia Tevelev}

\address{Ana-Maria Castravet: \sf Department of Mathematics, The Ohio State University, 100 Math Tower, 231 West 18th Avenue, Columbus, OH 43210-1174} 
\email{noni@alum.mit.edu}

\address{\vskip -.5cm Jenia Tevelev: \sf Department of Mathematics, 
University of Massachusetts at Amherst, Lederle Graduate Research Tower, Amherst, MA 01003-9305} 
\email{tevelev@math.umass.edu}

\subjclass[2000]{14E30, 14H10, 14J60, 14M25, 14N20} 

\begin{abstract}
Building on the work of Goto, Nishida and Watanabe on symbolic Rees algebras of monomial primes, we prove that the moduli space of stable rational curves with $n$ punctures is not a Mori Dream Space for $n>133$. This answers a question of Hu and Keel. 
\end{abstract}

\maketitle

\section{Introduction}

We work over an algebraically closed field $k$. 
It~was argued that $\oM_{0,n}$ should be a Mori Dream Space (MDS for short) because it is ``similar to a toric variety'' and toric varieties are basic examples of MDS.
We suggest an adjustment to this principle: $\oM_{0,n}$~is similar to the blow-up of a toric variety at the identity element of the torus. Specifically, we prove the following. 
For any toric variety $X$, we denote by $\Bl_eX$ the blow-up of $X$ at the identity element of the torus. Let $\oLM_n$ be the Losev--Manin space \cite{LM}. It is a smooth projective toric variety of dimension $n-3$.

\begin{theorem}\label{asdazxvsfvsfvsdaqwf}
There exists a small $\bQ$-factorial projective modification $\tLM_{n+1}$ of $\Bl_e\oLM_{n+1}$ and surjective morphisms
$$\tLM_{n+1}\to\oM_{0,n}\to\Bl_e\oLM_n.$$
In particular,
\begin{itemize}
\item If $\oM_{0,n}$ is a MDS then $\Bl_e\oLM_n$ is a MDS.
\item If $\Bl_e\oLM_{n+1}$ is a MDS then $\oM_{0,n}$ is a MDS.
\end{itemize}
\end{theorem}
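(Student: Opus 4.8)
The plan is to construct the two morphisms explicitly and then deduce the two implications formally, so I would first record the formal inputs the implications use. The first is the theorem that the image of a Mori dream space under a surjective morphism of normal projective varieties is again a Mori dream space (Okawa); applied to $\oM_{0,n}\to\Bl_e\oLM_n$, and noting that both spaces are smooth hence normal, this yields the first bullet. The second is that the MDS property is invariant under small $\bQ$-factorial modifications (Hu--Keel), so that---$\Bl_e\oLM_{n+1}$ being smooth and hence $\bQ$-factorial, and $\tLM_{n+1}$ being one of its SQMs by construction---the hypothesis that $\Bl_e\oLM_{n+1}$ is a MDS passes to $\tLM_{n+1}$; the surjection $\tLM_{n+1}\to\oM_{0,n}$ and the first input then give the second bullet. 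The real content is therefore the construction of the two morphisms and of the modification $\tLM_{n+1}$.

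For the morphism $\oM_{0,n}\to\Bl_e\oLM_n$ I would begin from the Hassett reduction morphism $\rho\colon\oM_{0,n}\to\oLM_n$ attached to the weight data in which $p_1,p_2$ are heavy poles and $p_3,\dots,p_n$ are light. It restricts to an isomorphism on interiors---both parametrize $p_3,\dots,p_n\in\bG_m$ modulo scaling once $p_1,p_2$ are placed at $0,\infty$---so $\rho$ is birational. The key geometric observation is that the preimage $\rho^{-1}(e)$ of the identity point is exactly the boundary divisor $D=D_{\{1,2\}}\cong\oM_{0,n-1}$: once $p_3,\dots,p_n$ have bubbled off together their common position is absorbed by the torus scaling, so the whole divisor collapses to $e$. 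I would then lift $\rho$ across the blow-up by the universal property, for which it suffices that $\rho^{-1}(\gm_e)\cdot\cO_{\oM_{0,n}}$ be invertible, equivalently that the rational map $\oM_{0,n}\dashrightarrow\Bl_e\oLM_n$ be base-point free. Since $\oM_{0,n}$ is smooth this is a local computation along $D$, which I would carry out in boundary coordinates while simultaneously identifying the induced map $D\cong\oM_{0,n-1}\to E=\bP^{n-4}$ with a Kapranov morphism. Surjectivity is then automatic: the image is closed by properness and contains $\Bl_e\oLM_n\setminus E$, hence is everything.

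The morphism $\tLM_{n+1}\to\oM_{0,n}$ I would organize around the universal curve $\oLM_{n+1}\to\oLM_n$, which has relative dimension one. A point of the interior of $\oLM_{n+1}$ is a configuration $p_3,\dots,p_{n+1}\in\bG_m$ modulo scaling; forgetting $p_{n+1}$ and then remembering $p_1,\dots,p_n$ only up to all of $\PGL_2$ (rather than just scaling) defines a rational map $\phi\colon\oLM_{n+1}\dashrightarrow\oM_{0,n}$, again of relative dimension one. This $\phi$ fails to be a morphism, with indeterminacy supported over the identity $e\in\oLM_{n+1}$ and possibly some boundary; blowing up $e$ resolves it in codimension one, and the exceptional divisor $E\cong\bP^{n-3}$ maps onto $\oM_{0,n}$, matching dimensions ($\dim E=n-3=\dim\oM_{0,n}$) in a manner dual to the previous paragraph. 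Whatever indeterminacy survives then lies in codimension $\ge 2$, and I would remove it by passing to a small $\bQ$-factorial projective modification $\tLM_{n+1}$ of $\Bl_e\oLM_{n+1}$, on which $\phi$ becomes the desired surjective morphism.

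The principal difficulty is this final step: showing that the modification needed to turn $\phi$ into a morphism is genuinely small, so that $\tLM_{n+1}$ really is an SQM of $\Bl_e\oLM_{n+1}$ and the transfer of the MDS property applies. Away from $e$ all the spaces in sight are toric, so I would analyze the indeterminacy of $\phi$ combinatorially, using the braid-arrangement (permutohedral) fan of $\oLM_{n+1}$, and check that it is resolved purely by modifying toric strata of codimension $\ge 2$---flips rather than divisorial extractions---so that $\tLM_{n+1}$ is isomorphic to $\Bl_e\oLM_{n+1}$ in codimension one. This smallness verification, together with the base-point-freeness needed in the construction of $\oM_{0,n}\to\Bl_e\oLM_n$, is where essentially all the work lies; granting the two morphisms and the modification, the two displayed implications are formal consequences of the invariance of the MDS property under small $\bQ$-factorial modifications and of the theorem that a surjective image of a Mori dream space is a Mori dream space.
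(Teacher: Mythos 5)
Your formal reductions (Okawa's theorem for surjections and Hu--Keel's invariance of the MDS property under SQMs) coincide with the paper's, and your construction of $\oM_{0,n}\to\Bl_e\oLM_n$ via the Hassett reduction $\rho$ is a reasonable alternative to the paper's (which reads that morphism off from Kapranov's iterated blow-up descriptions of both spaces), modulo the deferred check that $\rho^{-1}(\gm_e)\cdot\cO_{\oM_{0,n}}$ is invertible. The genuine gap is in the construction of $\tLM_{n+1}\to\oM_{0,n}$, and it begins with a false premise: the indeterminacy of $\phi$ is \emph{not} supported over $e$ and the boundary. Your $\phi$ factors as the universal curve map $\oLM_{n+1}\to\oLM_n$ followed by $\rho^{-1}$, and $\rho$ contracts every boundary divisor $\delta_I$ with $I$ a set of at least $3$ light points onto (the closure of) the diagonal $\Delta_I=\{p_i=p_j \hbox{ for all } i,j\in I\}$, which has codimension $|I|-1\ge 2$. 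Hence $\rho^{-1}$, and therefore $\phi$, is undefined along all of these partial diagonals and their preimages, and these sweep through the open torus: already for $n=6$ the four curves $\Delta_{\{i,j,k\}}$, $\{i,j,k\}\subset\{3,4,5,6\}$, pass through $e$ and continue out into the torus. Moreover, these centers are not torus-invariant, so they are invisible to the permutohedral fan; no combinatorial analysis of toric strata can resolve $\phi$, and in fact neither $\Bl_e\oLM_{n+1}$ nor any SQM of it is toric at all, since blowing up the identity destroys the torus action.

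Second, even where the indeterminacy sits in codimension $\ge 2$, the step ``remove it by passing to a small $\bQ$-factorial modification'' is not an argument: a rational map with small indeterminacy locus does not in general become regular on any SQM of the source --- the existence of such an SQM is a semiampleness-type assertion and is precisely the content to be proved. The paper supplies this content by working from the $\oM_{0,n}$ side rather than the $\oLM_{n+1}$ side: it takes the $\bP^1$-bundle $Y=\Psi^*\Bl_{q_n}\bP^{n-2}\to\oM_{0,n}$ (with $\Psi$ a Kapranov map), performs an explicit chain of Maruyama elementary transformations along the boundary divisors $\delta_{I\cup\{n\}}$ with sections pulled back from linear subspaces, and proves via the compatible-system criterion (Def.~\ref{compatible} and Prop.~\ref{mvcvcvcnb}, with the tangency condition \eqref{svsdasdvsdv} verified through the geometry of linear spans in $\bP^{n-2}$) that each step remains a $\bP^1$-bundle with sections. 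The output $Y_N$ is then visibly an SQM of the blow-up of $\bP^{n-2}$ at the points and the smaller linear subspaces, and $\tLM_{n+1}$ is obtained from $Y_N$ by the remaining codimension-two blow-ups, so both the surjection onto $\oM_{0,n}$ and the SQM relation to $\Bl_e\oLM_{n+1}$ come for free from the construction. Without a substitute for this mechanism, your proposal does not establish the existence of $\tLM_{n+1}$.
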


Next we invoke a beautiful theorem of Goto, Nishida, and Watanabe: 

\begin{theorem}[\cite{GNW}]\label{GNW theorem}
If $(a, b, c) = (7m-3, 5m^2-2m, 8m-3)$,  with $m\ge 4$ and $3\nmid m$, then $\Bl_e\bP(a,b,c)$ is not a MDS when $\ch k=0$.
\end{theorem}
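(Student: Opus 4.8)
The plan is to deduce the statement from what Goto, Nishida and Watanabe actually prove --- that a certain \emph{symbolic Rees algebra} fails to be finitely generated --- by identifying that algebra with the Cox ring of $\Bl_e\bP(a,b,c)$. Once the identification is in place, the Mori Dream Space property becomes, by definition, finite generation of the Cox ring, and the theorem follows.

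Set $S=k[x,y,z]$ with $\deg x=a$, $\deg y=b$, $\deg z=c$, so that $\bP(a,b,c)=\Proj S$, and let $\gp\subset S$ be the kernel of the graded homomorphism $S\to k[t]$ sending $x,y,z$ to $t^a,t^b,t^c$. This $\gp$ is the height-two prime of the monomial space curve. The first point I would establish is the characteristic-free geometric fact that the preimage of the torus identity $e=[1:1:1]$ under the quotient $\bA^3\setminus\{0\}\to\bP(a,b,c)$ is exactly the punctured monomial curve $V(\gp)\setminus\{0\}$: indeed a lift of $[1:1:1]$ has the form $(\lambda^a,\lambda^b,\lambda^c)$. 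Consequently ``vanishing to order $n$ at $e$'' downstairs translates into ``membership in the $n$-th symbolic power $\gp^{(n)}$'' upstairs.

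Writing $\pi\colon\tilde X=\Bl_e\bP(a,b,c)\to\bP(a,b,c)$ for the blow-up, with exceptional divisor $E$ and $H=\pi^*\cO(1)$, I would then verify the degreewise identification
\[
H^0\bigl(\tilde X,\, dH-nE\bigr)=\{\, f\in S_d : \mult_e f\ge n \,\}=\bigl(\gp^{(n)}\bigr)_d,
\]
and sum over the two gradings to obtain an isomorphism of bigraded algebras
\[
\Cox(\tilde X)\;\cong\;\bigoplus_{n\ge 0}\gp^{(n)}\;=:\;R_s(\gp),
\]
the symbolic Rees algebra of $\gp$. Alongside this I must record that $\tilde X$ is a projective $\bQ$-factorial surface (the blow-up of the smooth point $e$ in the open torus of the $\bQ$-factorial surface $\bP(a,b,c)$) with $\Cl(\tilde X)=\bZ H\oplus\bZ E$ free of finite rank, so that ``Mori Dream Space'' is genuinely equivalent to finite generation of $\Cox(\tilde X)$, hence to finite generation of $R_s(\gp)$.

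With this dictionary in hand, the theorem reduces to the assertion of \cite{GNW}: for $(a,b,c)=(7m-3,\,5m^2-2m,\,8m-3)$ with $m\ge 4$ and $3\nmid m$, the algebra $R_s(\gp)$ is not finitely generated when $\ch k=0$. This last input is the main obstacle and carries all the arithmetic of the problem: one must exhibit, for infinitely many $n$, a genuinely new generator of $\gp^{(n)}$ not accounted for by the lower symbolic powers, and the hypothesis $\ch k=0$ (together with $3\nmid m$) is essential, reflecting the fact that the finite-generation behaviour of $R_s(\gp)$ is genuinely characteristic-dependent. In the translation itself the only delicate step is matching the multiplicity at $e$ downstairs with the symbolic order along $V(\gp)$ upstairs, which follows from the $\bG_m$-equivariance of the construction since $C=V(\gp)$ is a single torus orbit closure; everything else is formal.
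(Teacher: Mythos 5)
Your proposal follows the same route as the first half of the paper's own proof: identify $e=V_+(\gp)$ with the image of the punctured monomial curve, identify $\HH^0(X,\cO(dA-lE))$ with $S_d\cap\gp^{(l)}$, conclude that the MDS property for $\Bl_e\bP(a,b,c)$ is equivalent (via \cite[Prop. 2.9]{HK}) to finite generation of the symbolic Rees algebra $R_s(\gp)$ --- an observation going back to Cutkosky \cite{Cutkosky} --- and then invoke \cite{GNW}. Two corrections are needed, one cosmetic and one substantive. Cosmetically, $\Cox(X)$ is not isomorphic to $R_s(\gp)$: the Cox ring is graded by all of $\Cl(X)\cong\bZ^2$, and the classes $dA+lE$ with $l\ge 0$ contribute $\HH^0(X,\cO(dA+lE))\cong S_d$, so $\Cox(X)$ is the \emph{extended} symbolic Rees algebra $R_s(\gp)[T^{-1}]$. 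This does not damage your argument, since finite generation of $R_s(\gp)[T^{-1}]$ and of $R_s(\gp)$ are equivalent, but the isomorphism as you state it is false.

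The substantive gap is in the final citation. What \cite[Cor. 1.2]{GNW} actually proves is that the symbolic Rees algebra $R_s(\hat\gp)$ of the extended ideal $\hat\gp$ in the formal power series ring $\hat S=k[[x,y,z]]$ is not Noetherian in characteristic $0$; it does not assert non-finite-generation of $R_s(\gp)$ over the polynomial ring, which is what your dictionary requires. Bridging the two needs an extra argument: by \cite[Lemma 2.3]{GN_ams} one has $R_s(\gp)\otimes_S\hat S\cong R_s(\hat\gp)$, so if $R_s(\gp)$ were a finitely generated $k$-algebra, then $R_s(\hat\gp)$ would be a finitely generated $\hat S$-algebra, hence Noetherian by the Hilbert basis theorem, contradicting \cite{GNW}. (Commutation of symbolic powers with completion is not automatic; this is exactly where the cited lemma does real work, and the paper spells this step out because the translation between the two formulations is the entire content of its first proof.) With that step inserted, your argument coincides with the paper's. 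Note also that the paper goes on to give a second, self-contained geometric proof --- using the negative curve $C$, a Max Noether ``AF+BG'' theorem for weighted projective planes, and reduction to characteristic $p$ --- which you do not attempt; that part is optional for logical completeness, but it is what makes the paper's treatment independent of the precise form of the results in \cite{GNW}.
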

We show that
\begin{theorem}\label{asdasdaqwf}
Let $n=a+b+c+8$, where $a,b,c$ are positive coprime integers.
If $\Bl_e\oLM_n$ is a MDS then $\Bl_e\bP(a,b,c)$ is a MDS. 
\end{theorem}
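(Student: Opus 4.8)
The plan is to pass through the Hu--Keel characterization of Mori dream spaces as exactly those $\bQ$-factorial projective varieties whose Cox ring is finitely generated, and to reduce the statement to a surjection of symbolic Rees algebras. For a smooth projective toric variety $X$ with $\Cox(X)=k[x_\rho]$ the polynomial ring on the rays, the identity $e\in T\subset X$ lifts, under the Cox torus acting on $\Spec\Cox(X)$, to the orbit of $(1,\dots,1)$; write $I_e$ for its (prime) ideal. Then $\Cox(\Bl_eX)$ is the multigraded symbolic Rees algebra $\bigoplus_{d\ge0}I_e^{(d)}$, so $\Bl_e\oLM_n$ is a MDS if and only if $\bigoplus_d I_e^{(d)}$ is finitely generated. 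On the other side, the standard computation of the Cox ring of the blow-up of a smooth point of a toric surface identifies $\Cox(\Bl_e\bP(a,b,c))$ with the symbolic Rees algebra $\bigoplus_d\gp^{(d)}$ of the monomial prime $\gp=\gp(a,b,c)\subset k[x,y,z]$, the defining ideal of the curve $t\mapsto(t^a,t^b,t^c)$; its finite generation is exactly what Theorem~\ref{GNW theorem} addresses.

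First I would realize $\bP(a,b,c)$ as a torus-invariant subvariety $Y\subset\oLM_n$ passing through $e$. Since any toric subvariety containing $e$ is the closure of a subtorus, the task is to produce a saturated rank-two sublattice $N'\subset N$ of the cocharacter lattice of $\oLM_n$, split off as a direct factor $N=N'\oplus N''$, whose subtorus $T'$ has closure $\overline{T'}\cong\bP(a,b,c)$ with $e_Y=e$. The combinatorial heart is to cut the $2$-plane $N'_\R$ out of the permutohedral (braid) fan — whose rays are indexed by subsets of the $n-2$ light markings — so that it meets only three maximal cones, producing exactly the three-ray fan of $\bP(a,b,c)$ with the weights $(a,b,c)$ appearing as the unique relation; the slack of the eight auxiliary markings ($n=a+b+c+8$) is what provides room to arrange this, possibly after a small $\bQ$-factorial modification in the spirit of Theorem~\ref{asdazxvsfvsfvsdaqwf}. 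Because $Y$ is a \emph{retract}, the restriction of global Cox sections is a surjective ring homomorphism $\phi\colon\Cox(\oLM_n)\twoheadrightarrow\Cox(Y)\cong k[x,y,z]$, carrying $I_e$ onto $\gp$.

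The decisive step is to promote $\phi$ to a surjection of symbolic Rees algebras $\bigoplus_d I_e^{(d)}\twoheadrightarrow\bigoplus_d\gp^{(d)}$, for which I must verify $\phi(I_e^{(d)})=\gp^{(d)}$ for every $d$. The inclusion $\phi(I_e^{(d)})\subseteq\gp^{(d)}$ is formal; the reverse inclusion — that no element of a symbolic power of $\gp$ is lost upon restriction to $Y$ — is the crux, and amounts to showing that restriction to $Y$ is transverse to all the symbolic powers, i.e. that the directions normal to $Y$ form a regular sequence on each $\Cox(\oLM_n)/I_e^{(d)}$ and that forming symbolic powers commutes with this section. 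Once this holds, finite generation of $\bigoplus_d I_e^{(d)}$ — the hypothesis that $\Bl_e\oLM_n$ is a MDS — passes to the quotient algebra, so $\bigoplus_d\gp^{(d)}$ is finitely generated and $\Bl_e\bP(a,b,c)$ is a MDS.

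I expect the transversality of the section with respect to the symbolic powers to be the main obstacle: symbolic powers are notoriously unstable under hyperplane section, so the argument cannot rest on a generic-position heuristic and must instead exploit the explicit toric description of $I_e$ and the monomial structure of $\gp$, which is precisely where coprimality of $a,b,c$ and the eight auxiliary markings enter. A secondary but unavoidable point is to keep every variety in sight $\bQ$-factorial and projective, so that the Hu--Keel criterion applies at each stage; I would handle this, as in Theorem~\ref{asdazxvsfvsfvsdaqwf}, by working with small $\bQ$-factorial modifications and checking that they leave finite generation of the Cox ring unaffected.
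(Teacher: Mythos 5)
Your reduction runs in the wrong direction, and this is a fatal structural problem rather than a technical one. You propose to realize $\bP(a,b,c)$ as a \emph{subvariety} of $\oLM_n$ (the closure of a rank-two subtorus through $e$) and to descend finite generation via restriction of Cox sections. But the MDS property passes to \emph{images of surjective morphisms} of $\bQ$-factorial projective varieties (Okawa's theorem, recalled in \ref{MDS obs}), not to subvarieties; restriction maps on sections, and a fortiori on symbolic powers, are not surjective in general. The paper's proof goes the opposite way: it takes the \emph{quotient} of the cocharacter lattice $N$ of $\oLM_n$ by the sublattice spanned by the vectors $e_{n_i}+e_r$, $r\in S_i\setminus\{n_i\}$, where $\{1,\ldots,n-2\}=S_1\sqcup S_2\sqcup S_3$ has parts of sizes $a+2$, $b+2$, $c+2$. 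Since $\sum_i e_i=0$, the relation $a\pi(e_{n_1})+b\pi(e_{n_2})+c\pi(e_{n_3})=0$ holds in the quotient, so the quotient surface is $\bP(a,b,c)$, and quotienting one vector at a time produces a tower of generically $\bP^1$-bundle toric fibrations $X_1\to X_2\to\cdots\to X_{n-4}\to\bP(a,b,c)$ with $X_1$ an SQM of $\oLM_n$. The technical heart (Prop.~\ref{toric}(B)) is that each induced rational map $\Bl_eX_i\dashrightarrow\Bl_eX_{i-1}$ becomes a regular surjection after an SQM upstairs, whence the MDS property descends step by step via \ref{MDS obs}. This also explains the number $8$: two extra indices per part of the partition are exactly what produces the weights $a,b,c$ in the quotient; it is not ``slack'' for positioning a plane inside the fan.

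Moreover, the step you yourself flag as the crux --- surjectivity of restriction on symbolic powers, $\phi(I_e^{(d)})=\gp^{(d)}$ --- is not merely delicate: it is false, and no argument of your shape can succeed. Take a GNW triple $(a,b,c)$ and embed $\bP(a,b,c)$ torus-equivariantly and projectively normally into $\bP^N$ by a sufficiently ample multiple of its $\bQ$-Cartier hyperplane class (in dimension two every lattice polygon is normal, so such embeddings exist); then $\bP(a,b,c)$ is the closure of a subtorus of the big torus of $\bP^N$ through $e=(1:\cdots:1)$. The ambient blow-up $\Bl_e\bP^N$ \emph{is} a MDS: by homogeneity under $\PGL_{N+1}$ it is isomorphic to the blow-up of $\bP^N$ at a torus-fixed point, a smooth projective toric variety. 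Yet $\Bl_e\bP(a,b,c)$ is not a MDS in characteristic $0$ by Thm.~\ref{GNW theorem}. So ``blow-up of ambient MDS $\Rightarrow$ blow-up of subtorus-closure MDS'' is simply false, and in this example the surjection of symbolic Rees algebras you want cannot exist, even though all the structural features your outline relies on (a saturated, hence split, sublattice; the identity lying on the subvariety; coprimality of the weights) are present. Since nothing in your argument distinguishes $\oLM_n$ from $\bP^N$ in this respect, the gap cannot be repaired within your framework; one must replace the embedding by a fibration, as the paper does.
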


It immediately follows from these results,  answering the question of Hu--Keel \cite[Question 3.2]{HK}, that:  
\begin{corollary}\label{main}
Assume $\ch k=0$. Then $\oM_{0,n}$ is not a Mori Dream Space for $n\ge 134$. 
\end{corollary}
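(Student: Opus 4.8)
The plan is to deduce Corollary \ref{main} by chaining together the three theorems stated above, with the Goto--Nishida--Watanabe family providing the explicit arithmetic witness. First I would combine Theorem \ref{asdasdaqwf} with Theorem \ref{GNW theorem} contrapositively: if $a,b,c$ are positive \emph{coprime} integers for which $\Bl_e\bP(a,b,c)$ fails to be a MDS, then $\Bl_e\oLM_n$ is not a MDS for $n=a+b+c+8$. Then the first bullet of Theorem \ref{asdazxvsfvsfvsdaqwf} upgrades this to the statement we want: if $\Bl_e\oLM_n$ is not a MDS, then $\oM_{0,n}$ is not a MDS. So the skeleton is $\Bl_e\bP(a,b,c)$ not MDS $\Rightarrow$ $\Bl_e\oLM_n$ not MDS $\Rightarrow$ $\oM_{0,n}$ not MDS, valid in $\ch k=0$ because that is the hypothesis of Theorem \ref{GNW theorem}.

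Next I would handle the arithmetic of producing the actual values of $n$. Setting $(a,b,c)=(7m-3,\,5m^2-2m,\,8m-3)$ as in Theorem \ref{GNW theorem}, I need these three integers to be pairwise coprime so that Theorem \ref{asdasdaqwf} applies (its hypothesis is that $a,b,c$ are coprime, and $\bP(a,b,c)$ is well-formed exactly when they are coprime). This is a routine verification that I would carry out using the constraints $m\ge 4$ and $3\nmid m$ already imposed by Goto--Nishida--Watanabe; the point is just that for admissible $m$ the triple is coprime, so $\Bl_e\bP(a,b,c)$ is genuinely not a MDS and Theorem \ref{asdasdaqwf} is applicable. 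With this in hand, each admissible $m$ yields a bad value
$$n=a+b+c+8=(7m-3)+(5m^2-2m)+(8m-3)+8=5m^2+13m+2.$$

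Finally I would translate the constraints on $m$ into the bound $n\ge 134$. The smallest admissible $m$ is $m=4$ (since $m\ge 4$ and $3\nmid 4$), giving $n=5\cdot 16+13\cdot 4+2=80+52+2=134$. Thus $\oM_{0,134}$ is not a MDS. The remaining point is that one wants the statement \emph{for all} $n\ge 134$, not merely for the sparse quadratic sequence $5m^2+13m+2$; here I would invoke the second bullet of Theorem \ref{asdazxvsfvsfvsdaqwf}, whose contrapositive reads: if $\oM_{0,n}$ is a MDS then $\Bl_e\oLM_{n+1}$ is a MDS, i.e.\ if $\Bl_e\oLM_{n+1}$ is not a MDS then $\oM_{0,n}$ is not a MDS. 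Combined with the first bullet this gives a propagation mechanism that fills in the intermediate values, so that badness at the discrete values $n=5m^2+13m+2$ spreads to all $n\ge 134$.

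I expect the main obstacle to be precisely this last propagation step. The coprimality check and the substitution into $n=a+b+c+8$ are mechanical, and the logical chaining of the three theorems is formal; but the excerpt's three theorems only directly produce the isolated quadratic values $5m^2+13m+2$, whereas the corollary asserts failure for every $n\ge 134$. Bridging that gap requires an inductive or monotonicity argument showing that the MDS property for $\oM_{0,n}$ is inherited upward (or that failure propagates), using the two implications of Theorem \ref{asdazxvsfvsfvsdaqwf} to step between consecutive values of $n$. Making this propagation airtight—ensuring one can always reach an arbitrary $n\ge 134$ from some bad value—is the part that demands genuine care rather than routine computation.
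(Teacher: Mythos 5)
Your skeleton is right and matches what the paper intends: chain Theorem \ref{GNW theorem} (with $m=4$, giving the pairwise coprime triple $(a,b,c)=(25,72,29)$ and $n=134$) with the contrapositives of Theorem \ref{asdasdaqwf} and of the first bullet of Theorem \ref{asdazxvsfvsfvsdaqwf} to conclude that $\oM_{0,134}$ is not a MDS, and then propagate failure upward to all $n\ge 134$. The arithmetic and the coprimality check are fine. But the propagation step, as you wrote it, contains a genuine logical error. The second bullet of Theorem \ref{asdazxvsfvsfvsdaqwf} says: if $\Bl_e\oLM_{n+1}$ is a MDS then $\oM_{0,n}$ is a MDS. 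Its contrapositive is: if $\oM_{0,n}$ is \emph{not} a MDS then $\Bl_e\oLM_{n+1}$ is \emph{not} a MDS. What you wrote (``if $\oM_{0,n}$ is a MDS then $\Bl_e\oLM_{n+1}$ is a MDS'') is the \emph{converse}, and the implication you then extract from it (``if $\Bl_e\oLM_{n+1}$ is not a MDS then $\oM_{0,n}$ is not a MDS'') is the \emph{inverse} of the bullet; neither follows from the stated theorems. Worse, with the implications exactly as you state them the induction cannot run: every implication at your disposal concludes failure of some $\oM_{0,n}$ from failure of some blow-up, but nothing produces failure of $\Bl_e\oLM_{n+1}$ from failure at level $n$, so you can never get from $134$ to $135$.

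The repair is precisely the statement you misquoted. By the true contrapositive of the second bullet, failure of $\oM_{0,n}$ gives failure of $\Bl_e\oLM_{n+1}$; then the contrapositive of the first bullet, applied at level $n+1$, gives failure of $\oM_{0,n+1}$. Together: $\oM_{0,n}$ not a MDS implies $\oM_{0,n+1}$ not a MDS, and induction from the base case $n=134$ finishes the proof --- this is why the paper treats the corollary as immediate, and it is also why the triples with $m\ge 5$ are never needed. (An alternative propagation, consistent with the paper's preliminaries, is to use the forgetful map $\oM_{0,n+1}\to\oM_{0,n}$: it is a surjective morphism of normal projective $\bQ$-factorial varieties, so by \ref{MDS obs} the MDS property passes to the image.) Your own remark that this step ``demands genuine care'' was the right instinct; the care required is only to take the contrapositive correctly.
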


Understanding the birational geometry of the moduli spaces $\oM_{g,n}$ of stable, 
$n$-pointed genus $g$ curves is a problem that has received a lot of attention from many authors. Interest in the effective cone originated in the work of Harris and Mumford \cite{HM}
who showed that $\oM_{g,n}$ is a variety of general type for large $g$.
Mumford also raised the question of describing the ample divisors, i.e., ~the nef cone. 
A long standing conjecture of Fulton and Faber provides a conjectural description,
which was reduced to the case of genus $0$ by Gibney, Keel, and Morrison \cite{GKM}.
This prompted Hu and Keel \cite{HK} to raise the question if $\oM_{0,n}$ is a Mori Dream Space. 
In positive genus, this is known to be typically false. For example,
Keel proved in \cite{Keel} that, in characteriztic zero, $\oM_{g, n}$ is not a MDS for $g\geq3$, $n\geq1$, by proving that it has a nef divisor that is not semiample. Recently, Chen and Coskun proved in \cite{CC} that  $\oM_{1,n}$ is not a MDS for $n\geq3$ as it has infinitely many extremal effective divisors. For genus zero, the only previously settled cases were for $n\leq6$ ($\oM_{0,5}$ is a del Pezzo surface, hence, a MDS by \cite{BP}; $\oM_{0,6}$ is log-Fano threefold, hence, a MDS by \cite{HK}; for a direct proof that $\oM_{0,6}$ is a MDS, see \cite{C}. Note more generally that in characteristic zero, log-Fano varieties are MDS by \cite{BCHM}; however, $\oM_{0,n}$ is not log-Fano for $n\geq7$).  Since \cite{HK}, the question whether $\oM_{0,n}$ is a MDS was raised by several authors, see for example 
\cite{C}, 
\cite{AGS}, \cite{GM}, 
\cite{Kiem}, 
\cite{McK_survey}, 
\cite{Fed}, 
\cite{Hausen}, \cite{GianGib}, \cite{Milena}, \cite{GM_nef}, 
\cite{BGM},  
\cite{CT1}, \cite{GianJenMoon}, 
\cite{Larsen}.
One of the results in \cite{Milena} is that $\oM_{0,n}$ is a MDS if and only if the projectivization of the pull-back of the cotangent bundle of $\bP^{n-3}$ to $\oLM_n$ is a MDS. In particular, Cor. \ref{main} adds to the examples in \cite{Milena} of toric vector bundles whose projectivization is not a MDS. 

\smallskip

The original motivation for Hu and Keel's question was coming from Keel and M\textsuperscript{c}Kernan's result \cite{KM} that any extremal ray of the Mori cone of $\oM_{0,n}$ that (1) can be contracted by a map of relative Picard number $1$ and (2)
the exceptional locus of the map in (1) has dimension at least $2$,  is generated by a one-dimensional stratum  (i.e., the Fulton-Faber conjecture is satisfied for such rays). As in a MDS any extremal ray of the Mori cone can be contracted by a map of relative Picard number $1$, a positive answer to the Hu-Keel question ``would nearly answer Fulton's question for  $\oM_{0,n}$" \cite{HK}. Implicit in this statement is the expectation that condition (2) should be satisfied.  It was a long held belief that the exceptional locus of any map $\oM_{0,n}\to X$ has all components of dimension at least $2$. We gave counterexamples to this statement in \cite{CT2}.

\begin{remarks}

(1) By the Kapranov description, $\oM_{0,n}$ is the iterated blow-up of $\bP^{n-3}$ along proper transforms of linear subspaces spanned by $n-1$ points in linearly general position. The Losev-Manin space $\oLM_n$ is the iterated blow-up of $\bP^{n-3}$ along proper transforms of linear subspaces spanned by $n-2$ points in linearly general position. We denote by $X_n$ the intermediate toric variety  obtained by blowing-up only linear subspaces of codimension $\geq3$. By Cor. \ref{codim 3}, $\Bl_e X_{n+1}$ is a small modification of a certain $\bP^1$-bundle over $\oM_{0,n}$. In particular, $\Bl_eX_{n+1}$ is not a Mori Dream Space if  $\ch k=0$ and $n\ge 134$.

(2) Thm. \ref{GNW theorem} is stated slightly differently in \cite{GNW}. In Section \ref{GNW section} we translate into a geometric proof the arguments in \cite{GNW}. They are based on  reduction to positive characteristic
and a version of Max Noether's ``AF+BG'' theorem that holds for weighted projective 
planes.

(3) Several arguments in this paper involve elementary transformations of vector bundles, for example
the second part of Thm. \ref{asdazxvsfvsfvsdaqwf} follows by doing elementary tranformations of rank $2$ bundles on $\oM_{0,n}$. We give a general criterion for being able to iterate elementary transformations (Prop.~\ref{mvcvcvcnb}),
which might be of independent interest.
\end{remarks}

{\bf Acknowledgements.} We are grateful to Aaron Bertram, Tommaso de Fernex, Jose Gonzalez, Sean Keel and James M\textsuperscript{c}Kernan for useful discussions. We thank the referee for several useful comments. The first author was partially supported by NSF grants DMS-1160626 and DMS-1302731. The second author was partially supported by NSF grants DMS-1001344 and DMS-1303415.

\section{Preliminaries}

We briefly recall some basic properties of MDS from \cite{HK}. 

Let $X$ be a normal projective variety. A \emph{small $\bQ$-factorial modification} (SQM for short) of $X$ is a small (i.e., isomorphic in codimension one) birational map $X\dashrightarrow Y$ to another normal $\bQ$-factorial projective variety $Y$. 

\begin{definition}\label{MDS def}
A normal projective variety $X$ is called a \emph{Mori Dream Space (MDS)} if the following conditions hold:
\begin{itemize}
\item[(1) ] $X$ is $\bQ$-factorial and $\Pic(X)_{\bQ}\cong \NN^1(X)_{\bQ}$; 
\item[(2) ] $\Nef(X)$ is generated by finitely many semi-ample line bundles;
\item[(3) ] There is a finite collection of SQMs $f_i: X\dashrightarrow X_i$ such that each $X_i$ satisfies (1), (2) and $\Mov(X)$ is the union of $f_i^*(\Nef(X_i))$.
\end{itemize}
\end{definition}


\begin{say}\label{MDS obs}
In what follows, we will often make use of the following facts:
\begin{itemize}
\item If $X$ is a MDS, any normal projective variety $Y$ which is an SQM of $X$, is also a MDS. 
This follows from the fact that the $f_i$ of Def. \ref{MDS def} are the only SQMs of $X$ \cite[Prop. 1.11]{HK}. 

\item  (\cite[Thm. 1.1]{Okawa})  
Let $X\to Y$ be a surjective morphism of projective normal $\bQ$-factorial varieties.
If $X$ is a MDS then $Y$ is a MDS. Note, we only use this for maps $f$ with connected fibers, in which case the statement follows from \cite{HK}. 
\end{itemize}
\end{say}

\begin{definition}\label{Cox}
For a semigroup $\Gamma$ of Weil divisors on $X$, consider the $\Gamma$-graded ring: 
$$R(X,\Gamma):=\bigoplus_{D\in\Gamma}\HH^0(X, \O(D)).$$
where $\cO(D)$ is the divisorial sheaf associated to the Weil divisor $D$. 
Suppose that the divisor class group $\Cl(X)$ is finitely generated. If $\Gamma$ is a group of Weil divisors such that $\Gamma_{\bQ}\cong\Cl(X)_{\bQ}$, the ring $R(X, \Gamma)$ is called a \emph{Cox ring} of $X$ and is denoted $\Cox(X)$. 
\end{definition}

The definition of $\Cox(X)$ depends on the choice of $\Gamma$, but finite generation of $\Cox(X)$ does not. Def. \ref{Cox} differs from  \cite[Def. 2.6]{HK}, in that $\Cl(X)$ replaces $\Pic(X)$. However, for us $X$ will always be $\bQ$-factorial; hence, 
finite generation of  $\Cox(X)$ is not affected. The following is an algebraic characterization of MDS:

\begin{theorem}\cite[Prop. 2.9]{HK}
Let $X$ be a $\bQ$-factorial projective variety with $\Pic(X)_{\bQ}\cong\NN^1(X)_{\bQ}$. 
Then $X$ is a MDS if and only if $\Cox(X)$ is a finitely generated $k$-algebra. 
\end{theorem}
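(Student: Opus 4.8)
The plan is to prove the two implications separately, using the observation that the Cox ring $\Cox(X)=\bigoplus_{D\in\Gamma}\HH^0(X,\cO(D))$ is supported, as a $\Gamma$-graded ring, on the cone of effective classes: only effective $D$ carry nonzero sections, so the support of the grading is $\overline{\mathrm{Eff}}(X)\cap\Gamma$, where $\Gamma$ is a lattice with $\Gamma_\bQ\cong\Cl(X)_\bQ\cong\NN^1(X)_\bQ$ (here $\bQ$-factoriality and the hypothesis $\Pic(X)_\bQ\cong\NN^1(X)_\bQ$ are used to identify $\Cl(X)_\bQ$ with $\NN^1(X)_\bQ$). The elementary but crucial gluing principle I would isolate first is this: if a $\Gamma$-graded ring is supported inside a cone $C$ that is a \emph{finite} union of subcones $C_1,\dots,C_r$, and each sub-ring $\bigoplus_{D\in\Gamma\cap C_i}\HH^0(X,\cO(D))$ is a finitely generated $k$-algebra, then so is the whole ring. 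Indeed, choosing finitely many homogeneous generators on each $C_i$ and collecting them suffices, since those generators have degrees in $C_i$, which is closed under addition, so every monomial occurring in an expression for a given class stays inside a single chamber that contains that class.

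For the forward direction ($X$ a MDS $\Rightarrow$ $\Cox(X)$ finitely generated), the MDS axioms hand us the finiteness we need. Condition (3) gives finitely many SQMs $f_i\colon X\dashrightarrow X_i$ with $\Mov(X)=\bigcup_i f_i^*(\Nef(X_i))$, and condition (2), applied to each $X_i$, makes every $\Nef(X_i)$ rational polyhedral with semi-ample generators. Since the $f_i$ are isomorphisms in codimension one, pushforward identifies $\HH^0(X,\cO(D))$ with $\HH^0(X_i,\cO(f_{i*}D))$, so over each cone $f_i^*(\Nef(X_i))$ the section ring equals a multi-section ring of semi-ample bundles on $X_i$; this is finitely generated by the standard argument that a semi-ample $L$ gives a morphism $X_i\to\Proj\bigoplus_m\HH^0(X_i,mL)$ under which $L$ pulls back from an ample class, whose section algebra is finitely generated. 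To cover all of $\overline{\mathrm{Eff}}(X)$ and not merely $\Mov(X)$, I would invoke the Zariski-type decomposition available on a MDS: every effective class is a movable part plus a fixed part supported on the finitely many prime divisors contracted by the rational maps attached to the chambers, with $\HH^0$ unchanged by the fixed part. This enlarges the finite polyhedral cover from $\Mov(X)$ to $\overline{\mathrm{Eff}}(X)$, and the gluing principle finishes the direction.

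For the reverse direction ($\Cox(X)$ finitely generated $\Rightarrow$ $X$ a MDS) I would argue through GIT. Writing $R=\Cox(X)$ and $T=\Hom(\Gamma,\bG_m)$ for the associated torus, the grading makes $\cX:=\Spec R$ an affine $T$-variety, and for an ample class $\chi$ the quotient $\cX/\!\!/_{\chi}T=\Proj\bigoplus_{m\ge0}R_{m\chi}$ recovers $X$. Finite generation of $R$ is exactly what is needed to run variation of GIT (Thaddeus, Dolgachev--Hu): the $T$-ample cone in $\Gamma_\bR$ decomposes into \emph{finitely many} rational polyhedral chambers with constant quotient, related by wall-crossings. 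The chamber of $\chi$ produces $\Nef(X)$ as a rational polyhedral cone whose generators are semi-ample, since they descend to ample classes on the corresponding $\Proj$, giving (2); the chambers meeting the wall of $\Nef(X)$ where $X$ is modified only in codimension $\ge2$ yield the small $\bQ$-factorial modifications $X_i$ whose nef cones tile $\Mov(X)$, giving (3); and (1) is assumed.

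The step I expect to be hardest is matching the GIT wall-and-chamber structure to the birational geometry in the reverse direction: one must show that the quotients attached to neighboring chambers inside $\Mov(X)$ really are small modifications of $X$, i.e.\ that the unstable loci change only in codimension $\ge2$, rather than genuinely distinct varieties, and that crossing a wall out of $\Mov(X)$ corresponds to contracting a divisor. Dually, in the forward direction the analogous difficulty is proving that the fixed prime divisors appearing in the Zariski decompositions are finite in number and that the resulting decomposition of $\overline{\mathrm{Eff}}(X)$ is an honest finite rational polyhedral fan. Both incarnations rest on the same finiteness and polyhedrality phenomenon, which is the real content behind the equivalence.
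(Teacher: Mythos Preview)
The paper does not give its own proof of this statement: it is quoted verbatim as \cite[Prop.~2.9]{HK} and used as a black box, so there is nothing in the paper to compare your argument against. Your sketch is, in broad outline, the Hu--Keel argument itself (forward direction via the finite chamber decomposition of $\Mov(X)$ and Zariski-type decompositions, reverse direction via VGIT on $\Spec\Cox(X)$), so in that sense you are reproducing the source rather than offering an alternative route.

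Two places where your outline is thinner than it should be. First, in the forward direction you reduce to finite generation of the section ring over a single chamber $f_i^*\Nef(X_i)$, but then invoke only the one-parameter fact that $\bigoplus_m\HH^0(X_i,mL)$ is finitely generated for semi-ample $L$; what is actually needed is the multi-graded statement that $R(X_i;L_1,\dots,L_r)$ is finitely generated when the $L_j$ are semi-ample and their span contains an ample class (this is \cite[Lemma~2.8]{HK}, and it does require a short argument beyond the $r=1$ case). Second, your ``gluing principle'' is correct as stated, but the cones you feed into it are not simply the $f_i^*\Nef(X_i)$: to cover $\overline{\mathrm{Eff}}(X)$ you must enlarge each chamber by the simplicial cone on the finitely many fixed prime divisors, and you should check that the resulting cones are still closed under addition and that the section ring over each enlarged cone is still finitely generated (this is where the equality $\HH^0(D)=\HH^0(P)$ from the Zariski decomposition is used, together with the fact that the fixed components form a finite set). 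You flag both of these as the hard points, which is fair, but the sketch would be more convincing if it named the precise lemmas rather than gesturing at them.
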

 

\section{Proof of Theorem \ref{asdasdaqwf}}\label{main section}

\begin{proposition}\label{toric}
Let $\pi:\,N\to N'$ be a surjective map of lattices 
(finitely generated free $\Z$-modules)
with kernel of rank $1$ spanned by a primitive vector $v_0\in N$. 
Let $\Gamma$ be a finite set of rays in $N_{\bR}$ spanned by elements of $N$, 
such that the rays $\pm{R_0}$ spanned by $\pm{v_0}$ are not in~$\Gamma$. Let $\cF'\subset N'_\bR$ be a complete simplicial fan with rays given by $\pi(\Gamma)$. 
Suppose that the corresponding toric variety $X'$ is projective
(notice that it is also $\bQ$-factorial because $\cF'$ is simplicial). Then

(A) There exists a complete simplicial fan $\cF\subset N_\bR$
with rays given by $\Gamma\cup\{\pm R_0\}$ and such that
\begin{itemize}
\item the corresponding toric variety $X$ is projective;
\item the rational map $p:\,X\dashrightarrow X'$ induced by $\pi$ is regular;
\item each cone of $\cF$ maps onto a cone of $\cF'$.
\end{itemize}

(B) There exists an SQM $Z$ of $\Bl_eX$ such that the 
rational map $Z\dashrightarrow\Bl_eX'$ induced by $p$ is regular.  In particular, if $\Bl_eX$ is a MDS then $\Bl_eX'$ is a MDS.
\end{proposition}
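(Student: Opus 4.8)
The plan is to prove (A) by an explicit ``doubling'' of $\cF'$ in the extra lattice direction $v_0$, and then to deduce (B) by resolving the induced rational map $\Bl_eX\dashrightarrow\Bl_eX'$ by a \emph{small} modification and invoking the two facts in \ref{MDS obs}. Write $e'\in X'$ for the identity of the torus of $X'$, so that $p(e)=e'$ and $\Bl_eX'=\Bl_{e'}X'$, and set $d:=\dim X'=\rk N'$. For (A) I would first fix, for each ray $\bar\rho$ of $\cF'$, a single lift $\rho\in\Gamma$ with $\pi(\rho)=\bar\rho$ (possible since the rays of $\cF'$ are exactly $\pi(\Gamma)$). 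For each maximal cone $\sigma'=\langle\bar\rho_1,\dots,\bar\rho_d\rangle$ of $\cF'$ I take the two $(d{+}1)$-dimensional cones $\sigma'_{\pm}=\langle\rho_1,\dots,\rho_d,\pm v_0\rangle$ together with all their faces, and let $\cF_0$ be the resulting collection.

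\textbf{Carrying out (A).} Because the lifts are chosen consistently, the $\sigma'_{\pm}$ glue along common faces; over the relative interior of each $\sigma'$ the cones $\sigma'_{+}$ and $\sigma'_{-}$ cover the whole fibre direction $\bR v_0$ and meet only along the common facet $\langle\rho_1,\dots,\rho_d\rangle$. Hence $\cF_0$ is a complete simplicial fan, and its maximal cones are strongly convex: a line inside $\sigma'_{\pm}$ would project into $\sigma'\cap(-\sigma')=0$, forcing it into $\bR v_0$, where $\sigma'_{\pm}$ contains only a ray. Each $\sigma'_{\pm}$ maps onto $\sigma'$ under $\pi$ and faces onto faces, so $p$ is regular and every cone maps onto a cone of $\cF'$. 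For projectivity I would pull back a strictly convex support function $\psi'$ of $\cF'$ and bend it in the $v_0$-direction, prescribing a piecewise-linear $h$ with $h(\rho_i)=\psi'(\bar\rho_i)$ and $h(\pm v_0)$ strictly below the flat continuation; this is strictly convex across the fold $\sigma'_{+}/\sigma'_{-}$ and inherits strict convexity across the remaining walls from $\psi'$, exhibiting an ample class on $X_0$. Finally, the rays of $\Gamma$ not used as lifts are incorporated by successively star-subdividing $\cF_0$ at them. Star subdivision preserves completeness, simpliciality and projectivity, and since each new ray lies in a slab $\pi^{-1}(\bar\rho)$, every resulting maximal cone still retains a full set of rays surjecting onto the rays of the ambient $\sigma'$, so the ``onto'' property survives. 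This produces $\cF$ and proves (A).

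\textbf{Carrying out (B).} The crucial input from (A) is that, because every cone maps onto a cone, the fibre $C:=p^{-1}(e')$ is a \emph{complete} curve; over a torus-neighbourhood of $e'$ the map $p$ is the trivial $\bP^1$-bundle and $C\cong\bP^1$ is the fibre through $e$. The induced map $g:\Bl_eX\dashrightarrow\Bl_{e'}X'$ is then regular away from the strict transform $\tilde C$ of $C$, and its only indeterminacy is along this complete curve of codimension $\ge 2$; on the exceptional divisor $E=\bP(T_eX)$ it is the linear projection $E\dashrightarrow\cE'=\bP(T_{e'}X')$ away from the point $[T_eC]$. I would resolve $g$ by a small $\bQ$-factorial modification $Z$ supported on $\tilde C$: concretely, form the $\bP^1$-bundle $W=X\times_{X'}\Bl_{e'}X'=\Bl_CX$ over $\Bl_{e'}X'$ and perform the elementary transformation of $W$ along $\cE'$ using the section singled out by $e$. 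The resulting $Z$ still fibres over $\Bl_{e'}X'$, and the claim to verify is that $Z$ is isomorphic to $\Bl_eX$ in codimension one: the divisor $E$ survives, now fibred over $\cE'$ with one-dimensional fibres, and only $\tilde C$ is flipped. Granting this, $Z$ is an SQM of $\Bl_eX$ together with a surjective morphism $Z\to\Bl_{e'}X'$ of $\bQ$-factorial projective varieties, so by the two items of \ref{MDS obs} (namely \cite[Prop.\,1.11]{HK} and \cite[Thm.\,1.1]{Okawa}) finite generation of the Cox ring passes from $\Bl_eX$ to $\Bl_eX'$.

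\textbf{Main obstacle.} The genuine difficulty is the assertion that $g$ can be resolved by an SQM rather than by a divisorial modification. Indeed the closure of the graph of $g$ acquires a divisor over $\tilde C$ (its fibre is all of $\cE'$), so the naive resolution is $\Bl_{\tilde C}\Bl_eX$, which is divisorial over $\Bl_eX$; one must instead \emph{flip} $\tilde C$, keeping $E$ as a divisor. This is exactly where completeness of $C$ (the ``onto'' property) is indispensable: it guarantees that the flipped locus is proper and that $Z$ is projective and $\bQ$-factorial. I expect to control the flip, and the verification that the elementary transformation reproduces $\Bl_eX$ away from codimension two, via the paper's criterion for iterating elementary transformations (Prop.~\ref{mvcvcvcnb}).
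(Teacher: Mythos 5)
Your part (A) is essentially the paper's own argument: choosing one lift per ray of $\cF'$, doubling each maximal cone by $\pm v_0$, and then star-subdividing in the remaining rays of $\Gamma$ is exactly the paper's induction on $|\Gamma|-|\pi(\Gamma)|$ unwound (the paper proves projectivity of the base case with the toric Kleiman criterion rather than by bending a support function, but the two arguments are interchangeable, provided you note that your bending term must be taken small so as not to destroy convexity across the walls lying over walls of $\cF'$). The problems are in part (B). The object you propose to transform does not have the structure you attribute to it: $W=X\times_{X'}\Bl_{e'}X'=\Bl_CX$ is \emph{not} a $\bP^1$-bundle over $\Bl_{e'}X'$. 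The morphism $p:X\to X'$ is guaranteed to be a $\bP^1$-bundle only over the open torus $T'$; away from $T'$ its fibers are merely set-theoretic $\bP^1$'s (they can be non-reduced), and after the star subdivisions of part (A) they can even be reducible chains of $\bP^1$'s, namely over torus-fixed points of $X'$ lying under several rays of $\Gamma$ with the same image. Consequently ``the elementary transformation of $W$ along $\cE'$'' does not literally make sense as an operation on a rank-$2$ bundle. What does make sense --- and is what the paper does --- is to perform the elementary transformation of the \emph{trivial} bundle $\Bl_{e'}T'\times\bP^1$ over the open set $\Bl_{e'}T'\supset\cE'$, obtaining $\bP(\cO\oplus\cO(-\cE'))$, and then \emph{glue} the result to $p:X\setminus p^{-1}(e')\to X'\setminus\{e'\}$ along the common overlap. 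Your construction must be restated in this glued form.

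Once $Z$ is built by gluing, two verifications remain, and your proposal does not supply either. First, projectivity of $Z$ is a genuine issue that ``completeness of $C$'' does not settle: projectivity is not local on the base, so the fact that $Z\to\Bl_{e'}X'$ is projective on each of the two charts proves nothing (the paper makes this point explicitly). The paper's fix is global: take an irreducible very ample divisor $A$ on $X$ and check that its proper transform $\tilde A\subset Z$ is relatively ample --- clear on the chart where $Z$ agrees with $X$, and true on the bundle chart because there $\tilde A$ is finite and surjective over the base. Some argument of this kind is indispensable ($\bQ$-factoriality, by contrast, is Zariski-local and really is free from the two charts). Second, your plan to verify ``that the elementary transformation reproduces $\Bl_eX$ away from codimension two'' by invoking Prop.~\ref{mvcvcvcnb} is off target: that proposition only says that a compatible system of sections survives an elementary transformation, so that transformations can be iterated; it does not identify an elementary transform with a blow-up, nor produce SQMs. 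The identification you need is the concrete local computation in the paper: over $\Bl_{e'}T'$, both $\Bl_eX$ and $Z$ are blow-downs of the same variety $\Bl_{s_0(\cE')}\bigl(\Bl_{e'}T'\times\bP^1\bigr)$, contracting the proper transform of $\cE'\times\bP^1\cong\bP^{d-1}\times\bP^1$ along its two rulings --- to the curve $\tilde C$ on one side and to a $\bP^{d-1}$ on the other --- and this is checked directly on fans. That computation, not Prop.~\ref{mvcvcvcnb}, is the missing step; with it and the projectivity argument in place, the conclusion via \ref{MDS obs} goes through as you say.
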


\begin{proof}
We first prove (A). We argue by induction on $|\Gamma|-|\pi(\Gamma)|$. Suppose that this number is zero, and in particular we have a bijection between $\Gamma$ and $\pi(\Gamma)$.
Then we define $\cF$ as follows: for any subset $J\subset\Gamma$ (maybe empty)
such that the rays spanned by the vectors in $\pi(J)$ form a cone, $\cF$ contains the cone spanned by the rays in $J$, the cone spanned by the rays in $J\cup\{R_0\}$, and the cone spanned by the rays in $J\cup\{-R_0\}$. It follows from the fact that $\cF'$ is a complete simplicial fan that $\cF$ is a also a complete simplicial fan $\cF\subset N_\bR$ with rays in $\Gamma\cup\{\pm R_0\}$. Moreover, the rational map 
$p:\,X\dashrightarrow X'$ induced by $\pi$ is regular and in fact
each cone of $\cF$ maps onto a cone of $\cF'$. 

Next we show that $X$ is projective.  
It follows from the description of the map of fans that
all fibers of $p$ are $\bP^1$'s (only set-theoretically because the fibers are not necessarily reduced), and moreover~$D_{0}$, the torus invariant $\bQ$-Cartier divisor corresponding to the ray~$R_0$, is a section of $p$ and therefore is $p$-ample.
It follows that $p$ is projective and therefore that $X$ is projective because $X'$ is projective.
For a purely toric proof of projectivity, let $A$ be an ample Cartier divisor on $X'$.
 Let $D=D_0+mp^*(A)$. We argue that the $\bQ$-Cartier divisor 
$D$ is ample for large $m>0$ by using the Toric Kleiman Criterion \cite[Thm. 6.3.13]{CLS}, i.e., we prove that $D\cdot C>0$ for every torus invariant curve $C$ in $X$. Torus invariant curves have the form $V(\tau)$, for $\tau$ a cone in $\cF$ of dimension $n-1$ ($n=\dim X$). There are two cases: (1) $\tau$ is spanned by rays $R_1,\ldots, R_{n-1}$ in $\Gamma$; and (2) $\tau$ is spanned by $R_0$ and rays  $R_1,\ldots, R_{n-2}$ in $\Gamma$. In Case (1), $p(C)$ is a point in $X'$; hence, $D\cdot C=D_0\cdot C$. Note that $\tau=\sigma\cap\sigma'$, where $\sigma$ is the cone spanned by $\tau$ and $R_0$ and $\sigma'$ is the cone spanned by $\tau$ and $-R_0$. Then by \cite[Lemma 6.4.2]{CLS} $D_0\cdot C=\frac{\mult(\tau)}{\mult(\sigma)}>0$, where $\mult(\sigma)$ denotes the multiplicity of a simplicial cone $\sigma$. In Case (2), $p(C)$ is the torus invariant curve $V(\overline{\tau})$ in $X'$, where $\overline{\tau}=\langle\pi(R_1),\ldots, \pi(R_{n-2})\rangle$. Let $M\geq0$ be an integer such that $D_0\cdot C'> -M$, for all the torus invariant curves $C'$ in $X$. By the projection formula, $D\cdot C=D_0\cdot C+ mA\cdot p_*(C)>0$ if $m\geq M$.  



Now we do the inductive step. Let $R'\in\pi(\Gamma)$ and let $Z\subset\Gamma$ be 
the set of all rays $R\in\Gamma$ such that $\pi(R)=R'$. Without loss of generality 
we can suppose that $|Z|>1$. Choose $R\in Z$. Let $\tilde\Gamma=\Gamma\setminus\{R\}$.
Since the rays of $\cF'$ are given by $\pi(\tilde\Gamma)=\pi(\Gamma)$, by the inductive assumption, the theorem is true for $\tilde\Gamma$. Let $\cG\subset N_\bR$ be the corresponding fan and $\tilde{X}$ be the corresponding toric variety. Let $\pi_{\bR}: N_{\bR}\to N'_{\bR}$ be the map induced by $\pi$. Then $\pi_{\bR}^{-1}(R')\subset N'_{\bR}$ is a $2$-dimensional half-space, which is the union of the cones in $\cG$ spanned by pairs of rays:
$$\{R_0=U_0,U_1\}, \{U_1,U_2\},\ldots, \{U_{k-1},U_k\}, \{U_k,U_{k+1}=-R_0\},$$
where $\{U_1,\ldots,U_k\}=Z\setminus\{R\}$ (see Figure \ref{fanpic}). 
\begin{figure}[htbp]
\includegraphics[width=4in]{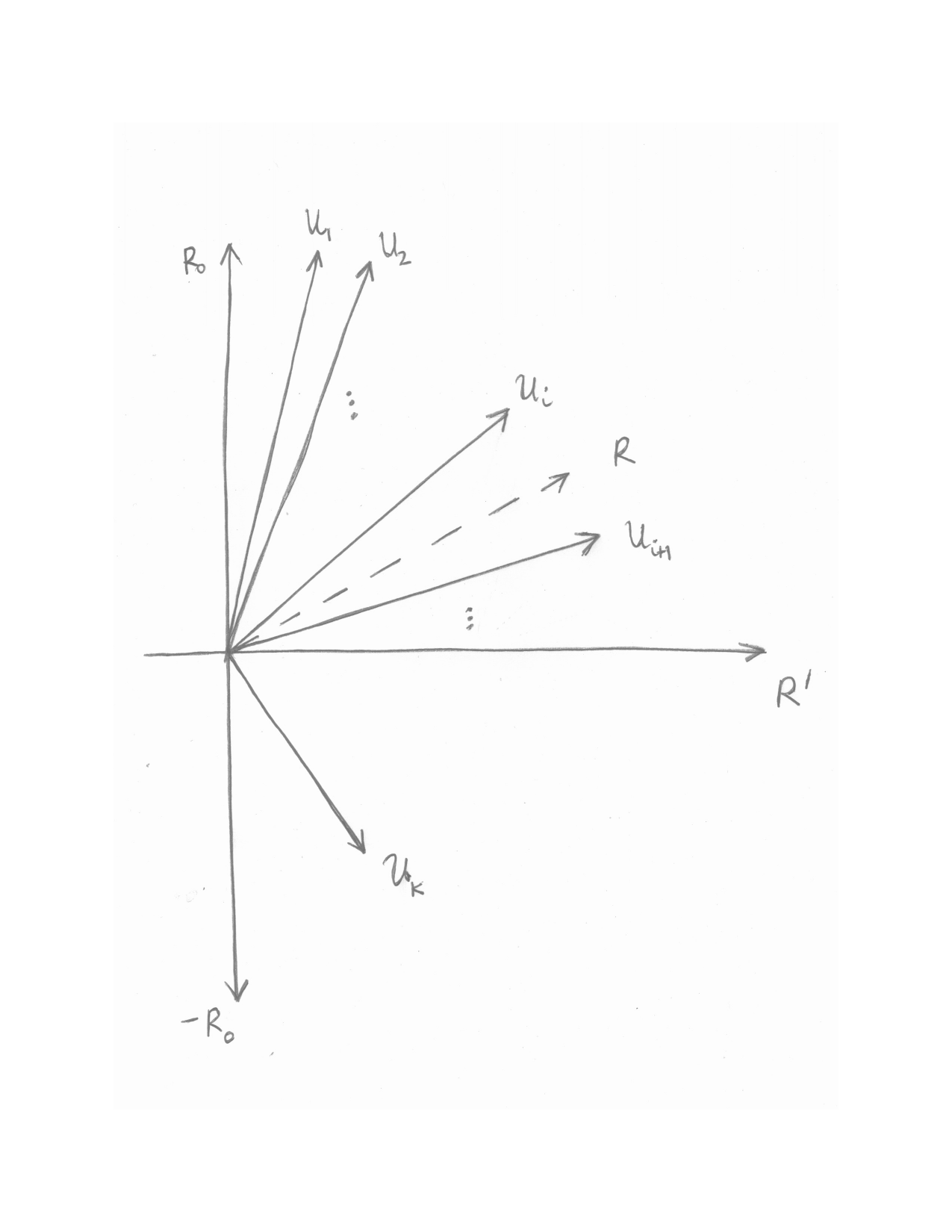}
\caption{\small 
The rays $U_1,\ldots, U_k$ of $\tilde{\Gamma}$ that map to $R'$.}\label{fanpic}
\end{figure}

Choose an index $i$ such that $R$ belongs to 
the relative interior of the angle spanned by $U_i$ and $U_{i+1}$. Then the fan $\cF$ is obtained as a star subdivision on $\cG$ centered at $R$. 
By \cite[Prop. 11.1.6]{CLS} the map $X\to \tilde{X}$ is projective. All properties in (A) are clearly satisfied.

Now we prove (B). 
Notice that the map $p:\,X\to X'$ 
over the open torus $T'\subset X'$ is a trivial $\bP^1$-bundle
$\pr_1:\,T'\times\bP^1\to T'$ (recall that the map $p:\,X\to X'$ is not globally a 
$\bP^1$-bundle). To construct $Z$ and the morphism $f: Z\to\Bl_e X'$ that factors through $\Bl_e X$, we first construct a small modification $Z'$ of $\Bl_e(T'\times\bP^1)$ and a morphism  
$$f': Z'\to \Bl_e T'$$ resolving the induced rational map 
$\Bl_e(T'\times\bP^1)\dashrightarrow \Bl_e T'$.

We then obtain $Z$ and $f$ by gluing $f'$ to 
$$p: X\setminus p^{-1}\{e\}\to X'\setminus\{e\}$$ along
the $\bP^1$-bundle $\pr_1:\,(T'\setminus\{e\})\times\bP^1\to (T'\setminus\{e\})$. 

To construct $Z'$, we do a linear change of variables to identify 
$$T'\simeq\bA^k\setminus\bigcup_i\{x_i=-1\},\quad e\mapsto 0$$
and
$$\bP^1\simeq\bP^1,\quad 1\mapsto 0.$$
Thus we identify $p_{|p^{-1}(T')}$ with the restriction of the toric projection map
$\pr_1:\,\bA^k\times\bP^1\to \bA^k$
(for a different choice of the toric structure) to the open set $T'\subset\bA^k$.
Blow-ups of $X$ and $X'$ at the identity elements of their tori now correspond to 
blow-ups  in torus fixed points:
$$Y:=\Bl_0{\bA^k\times\bP^1},\quad Y':=\Bl_{0}\bA^k$$

The fans are as follows: the fan of $Y'$ is the star subdivision of the positive octant $\langle e_1,\ldots,e_k\rangle$ in the vector $e_0:=e_1+\ldots+e_k$. Its top-dimensional cones are spanned by $e_0$ and $\{e_i\}_{i\in I}$, where $I\subset\{1,\ldots,k\}$ is a subset of cardinality $k-1$.
The fan of $Y$ contains an octant $\tau=\langle e_1,\ldots,e_k,-e_{k+1}\rangle$
and the star subdivision of the positive octant $\langle e_1,\ldots,e_k,e_{k+1}\rangle$
in the vector $f_0:=e_1+\ldots+e_{k+1}$. In particular, the fan of $Y$ contains the cone
$\tau'=\langle e_1,\ldots,e_k,f_0\rangle$. We construct a small modification $Z'$ of $Y$
as follows: We remove the cones $\tau$ and $\tau'$ from the fan of $Y$
and instead add $k$ top-dimensional cones spanned by $f_0$, $-e_{k+1}$, and 
$\{e_i\}_{i\in I}$, where $I\subset\{1,\ldots,k\}$ is a subset of cardinality $k-1$. To see this geometrically, consider the trivial bundle  $\bP:=Y'\times\bP^1\to Y'$ with its sections $s_0=Y'\times\{0\}$ and $s_{\infty}=Y'\times\{\infty\}$. If $E$ denotes the exceptional divisor of $Y'\to\bA^k$, let $Z=s_0(E)$. Let $\tilde\bP$ be the blow-up of $\bP$ along $Z$. Let $D=E\times\bP^1\subset\bP$ and  let $\tilde D$ be its proper transform in $\tilde\bP$. There are two ways to blow-down $\tilde D\cong\bP^{k-1}\times\bP^1$: 
$$\al: \tilde\bP\to Z',\quad \al(\tilde D)=\tilde{s}_{\infty}(E)\cong\bP^{k-1},$$
$$\be: \tilde\bP\to Y,\quad \be(\tilde{D})=\tilde{F}\cong\bP^1,\quad F=\{0\}\times\bP^1$$  
where $\tilde{s}_{\infty}$ is the proper transform of the section $s_{\infty}$ under the rational map $\bP\dashrightarrow Z'$ and $\tilde{F}$ is the proper transform of $F$ in $Y$. Notice that the rational map $Z'\dashrightarrow Y'$ is regular, and one can check that it is the $\bP^1$-bundle $\bP_{Y'}(\cO\oplus\cO(-E))$. Note that over $Y'\setminus E\cong(\bA^k\setminus\{0\})\times\bP^1$ all above birational maps are isomorphisms.  

\begin{remark}
Note that $Z'$ is the elementary transformation of the trivial 
$\bP^1$-bundle over $Y'$ given by the data $(E,Z)$ (see Section \ref{iff section}). 
Alternatively, one can construct $Z'$ and $f'$ by doing this elementary transformation. Then it is not hard to argue that the new $\bP^1$-bundle is a small modification of 
$\Bl_e(T'\times\bP^1)$.  
\end{remark}

To construct $Z$ and the morphism $f: Z\to\Bl_e X'$, we glue $Z'\to Y'$  (with preimages of hyperplanes $\{x_i=-1\}$ removed) to 
$$p: X\setminus p^{-1}\{e'\}\to X'\setminus\{e'\}$$ along
the $\bP^1$-bundle $\pr_1:\,(T'\setminus\{e\})\times\bP^1\to (T'\setminus\{e\})$. 
Clearly, $Z$ is $\bQ$-factorial, since $Z'$ and $X$ are $\bQ$-factorial.   

It remains to show that $Z$ is projective and it would suffice to show that the morphism $f$ is projective. This morphism is clearly projective in both charts of $Z$, but since projectivity is not local on the base, we have to give a global argument. It is enough to construct an $f$-ample divisor on $Z$. Let $A$ be an irreducible very ample divisor on $X$ and let $\tilde A$ be its proper transform in $Z$. We claim that $\tilde A$ is $f$-ample. Indeed, it is obviously $f$-ample in the second chart of $Z$. But the first chart is a $\bP^1$-bundle and $\tilde A$ surjects onto the base, and so it is $f$-ample.
\end{proof}

\begin{proof}[Proof of Thm. \ref{asdasdaqwf}.]
The toric data of $\oLM_n$ is as follows, see \cite{LM}. Fix general vectors $e_1,\ldots,e_{n-2}\in\bR^{n-3}$
such that $e_1+\ldots+e_{n-2}=0$. The lattice $N$ is generated by $e_1,\ldots,e_{n-2}$.
The rays of the fan of $\oLM_n$ are spanned by the primitive lattice vectors $\sum_{i\in I}e_i$, for each subset 
$I$ of $S:=\{1,\ldots, n-2\}$ with $1\le |I|\le n-3$. 
Notice that rays of this fan come in opposite pairs.
We are not going to need cones of higher dimension of this fan.
The main idea is to choose a sequence of projections from these rays
to get a sequence of (generically) $\bP^1$-bundles
$$X_1\to X_2\to X_3\to X_4\to\ldots,$$
where $X_1$ is an SQM of $\oLM_n$ which is different from the 
standard tower of forgetful maps
$$\oLM_n\to\oLM_{n-1}\to\oLM_{n-2}\to\ldots$$
Specifically, we partition 
$$S=S_1\coprod S_2\coprod S_3$$ 
into subsets of size $a+2, b+2, c+2$ (so $n=a+b+c+8$).
We also fix some indices $n_i\in S_i$, for $i=1,2,3$. 
Let $N''\subset N$ be a sublattice spanned by the following vectors:
\begin{equation}\label{axfafgfgfg}
e_{n_i}+e_r\quad\hbox{\rm for}\quad r\in S_i\setminus\{n_i\},\ i=1,2,3.
\cooltag\end{equation}
Let $N'=N/N''$ be the quotient group and let $\pi$ be the projection map.
Then we have the following:
\begin{enumerate}
\item $N'$ is a lattice;
\item $N'$ is spanned by the vectors $\pi(e_{n_i})$, for $i=1,2,3$;
\item $a\pi(e_{n_1})+b\pi(e_{n_2})+c\pi(e_{n_3})=0$.
\end{enumerate}
It follows at once that the toric surface with lattice $N'$ and rays spanned by $\pi(e_{n_i})$ for $i=1,2,3$,  is a weighted projective plane $\bP(a,b,c)$. 

To finish the proof of the theorem, we apply Prop. \ref{toric} inductively to the sequence of lattices $N_j$, $j=1,\ldots,n-4$, obtained by taking the quotient of $N$ by the sublattice spanned by the first $j-1$ vectors of the sequence \eqref{axfafgfgfg}
(arranged in any order) and the sets of rays $\Gamma_j$ obtained by projecting the rays of the fan of $\oLM_n$.
More precisely, we do a backwards induction, by starting with the canonical simplicial structure 
on the fan of the complete (hence, projective) toric surface $X_{n-4}$ with data $N'=N_{n-4}$, $\Gamma_{n-4}$. It remains to notice that we have a regular map $X_{n-4}\to\bP(a,b,c)$ obtained by dropping all vectors in $\Gamma_{n-4}$ except for $\pi(e_{n_i})$ for $i=1,2,3$. Clearly, the map is an isomorphism on the open torus; hence, there is a birational morphism $\Bl_e X_{n-4}\to\Bl_e\bP(a,b,c)$. The result of applying induction is a sequence of toric morphisms 
$$X_1\to X_2\to\ldots\to X_{n-4},$$
such that the rational map $\Bl_e X_i\dashrightarrow\Bl_e X_{i+1}$ factors through a projective $\bQ$-factorial small modification $Z_i$ of  $\Bl_e X_i$, followed by a surjective regular map $Z_i\to\Bl_e X_{i+1}$. The first toric variety in the sequence $X_1$ is a small modification of $\oLM_n$ (having the same rays) which is an isomorphism on the open torus. Hence, 
$\Bl_e X_1$ is a small modification of $\Bl_e\oLM_n$. The result now follows from  Thm. \ref{GNW theorem} and \ref{MDS obs}. 
\end{proof}



\section{Proof of Theorem \ref{GNW theorem}}\label{GNW section}

The results in  \cite{GNW} are stated in a slightly different form than Thm. \ref{GNW theorem}.
We first explain how our formulation is equivalent to \cite[Cor. 1.2]{GNW}. For the reader's convenience, we also translate the arguments in  \cite{GNW} into a geometric proof of Thm. \ref{GNW theorem}.

\medskip

Let $a, b, c>0$ be pairwise coprime integers. Let $\bP:=\bP(a,b,c)$ be the weighted projective space $\Proj k[x,y,z]$, with $\deg(x)=a$, $\deg(y)=b$, $\deg(z)=c$. Then $\bP$ is a toric variety which is smooth outside the three torus invariant points. Consider the  torus invariant divisors:
$$D_1=V_+(x), \quad D_2=V_+(y), \quad D_3=V_+(z).$$

Let $m_i$ ($i=1,2,3$) be integers such that $m_1a+m_2b+m_3c=1$ and let $H=\sum m_i D_i$. Then $\Cl(\bP)=\bZ\{H\}$, $H$ is $\bQ$-Cartier and $H^2=1/(abc)$. 

Let $\gp:=\gp(a,b,c)$ be the kernel of  the $k$-algebra homomorphism:
$$\phi: k[x, y, z]\rightarrow k[t], \quad\phi(x)=t^a,\quad\phi(y)=t^b,\quad\phi(z)=t^c.$$

The identity of the open torus in $\bP$ is the point $e=V_+(\gp)$. Let $X=\Bl_e\bP$ denote the blow-up of $\bP$ at $e$; let $E$ denote the exceptional divisor. As $e\notin D_i$, we can
pull-back to $X$ the Weil divisors $D_i$ and let $A=\sum m_i\pi^{-1}(D_i)$. Then $\Cl(X)=\bZ\{A, E\}$. A Cox ring of $X$ is:
$$\Cox(X)=\oplus_{d,l\in\bZ}\HH^0(X, \O(dA-lE)). $$
Note that since $a,b,c$ are pairwise coprime, $\cO(dH)\cong\cO(d)$. 

\smallskip

It was observed by Cutkosky \cite{Cutkosky} that finite generation of $\Cox(X)$ is equivalent to the finite generation of the 
symbolic Rees algebra $R_s(\gp)$ (here we follow the exposition in \cite{Kurano-Matsuoka}). Recall that for a prime ideal $\gp$ in a ring $R$, the $l$-th \emph{symbolic power} of $\gp$ is the ideal:
$$\gp^{(l)}=\gp^lR_{\gp}\cap R.$$
The subring of the polynomial ring $R[T]$ given by
$$R_s(\gp):=\bigoplus_{l\geq0}\gp^{(l)}T^l,$$
is called the  \emph{symbolic Rees algebra} of $\gp$. 

In our situation, for the prime ideal $\gp$ in $S=k[x,y,z]$ defined above, we identify  the symbolic Rees algebra $R_s(\gp)$ with a subalgebra of $\Cox(X)$. Using the identification $\HH^0(\bP,\O(d))=S_d$, we have:
$$\HH^0(X, \O(dA-lE))\cong\HH^0(\bP, \O(d)\otimes\I^l_e)=S_d\cap\gp^{(l)},$$ 
where $\I_e$ denotes the ideal sheaf of the point $e$. It follows that $R_s(\gp)$ is isomorphic to the subalgebra of $\Cox(X)$ given by $$\bigoplus_{d,l\geq0}\HH^0(X,\O(dA-lE)).$$
Moreover, $\Cox(X)$ is isomorphic to the extended symbolic Rees ring:
$$R_s(\gp)[T^{-1}]=\ldots\oplus ST^{-2}\oplus ST^{-1}\oplus S\oplus \gp T\oplus\gp^{(2)}T^2\oplus\ldots$$
Clearly, $R_s(\gp)$ is a finitely generated $k$-algebra if and only if $\Cox(X)$~is.

Assume now that
$$(a,b,c)=(7m-3, 5m^2-2m, 8m-3), \quad m\geq4,\quad m\not\equiv0 \mod 3.$$ 

By \cite[Cor. 1.2]{GNW}, the symbolic Rees algebra $R_s(\hat{\gp})$ of the extended ideal $\hat{\gp}$ in the formal power series ring $\hat{S}=k[[x,y,z]]$ is not Noetherian if $\ch k=0$
 (and it is Noetherian if $\ch k>0$).  Since 
 $R_s(\gp)\otimes_S\hat{S}\cong R_s(\hat{\gp})$ \cite[Lemma 2.3]{GN_ams}, it follows that $R_s(\gp)$ is not 
 finitely generated.
 Indeed, otherwise $R_s(\hat{\gp})$ would be a finitely generated $\hat S$-algebra and hence
 Noetherian by Hilbert's basis theorem.
 

%

\medskip

We now give a geometric proof of Thm. \ref{GNW theorem}.  First note the following characterization of $X$ being a MDS in the presence of a negative curve:

\begin{lemma}\cite{Huneke,Cutkosky}\label{MDS surface} 
Assume $X=\Bl_e\bP$ contains an irreducible curve $C\neq E$ with $C^2<0$. Then $X$ is a MDS if and only if there exists an effective divisor $D$ such that $D\cdot C=0$ and 
$D$ does not contain $C$ as a fixed component. 
\end{lemma}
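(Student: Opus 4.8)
The plan is to characterize finite generation of $\Cox(X)$ via the Nef and Movable cones, using the fact that $X$ is a surface and $\Cl(X)_\bQ$ has rank $2$. The key structural input is that $X$ carries the curves $E$ and $C$, both of negative self-intersection ($E^2 = -1$ and $C^2 < 0$ by hypothesis), which pin down two of the extremal rays we need to control. First I would set up the pseudoeffective cone $\NEbar(X)^\vee$-side picture: in $N^1(X)_\bR \cong \bR^2$, the cone of effective (equivalently, by duality on a surface, the pseudoeffective) divisors is a two-dimensional cone, and I want to show that $X$ is a MDS precisely when this cone is \emph{rational polyhedral} with semiample boundary rays.

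The heart of the argument is the following. Since $X$ is a $\bQ$-factorial projective surface with $\Pic(X)_\bQ \cong \NN^1(X)_\bQ$ of rank $2$, by the algebraic characterization (\cite[Prop. 2.9]{HK}) it is a MDS iff $\Cox(X)$ is finitely generated. The two extremal rays of the effective cone are what matter. I would argue that the ray spanned by the class of $C$ is always one extremal ray: because $C^2 < 0$, the class $[C]$ spans an extremal ray of $\NEbar(X)$, and on a surface any effective divisor numerically proportional to $C$ must contain $C$ as a fixed component, so this ray is ``rigid.'' The opposite extremal ray is the one that governs finite generation. The plan is to show that $X$ is a MDS iff that second ray is rational and the corresponding divisor is semiample; the existence of an effective $D$ with $D \cdot C = 0$ not containing $C$ is exactly the statement that the nef class $D^\perp$ (the wall orthogonal to $C$) is effective and gives the needed semiample generator.

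Concretely, for the forward direction I would assume $X$ is a MDS and extract from condition (2) of Definition \ref{MDS def} a finite set of semiample generators of $\Nef(X)$; the nef ray adjacent to $C$ (i.e.\ the ray $N$ with $N \cdot C = 0$) is then spanned by a semiample class, and some effective multiple of it gives the desired divisor $D$, which cannot contain $C$ since $D$ is nef and $D \cdot C = 0$ while $C^2 < 0$ would force a negative intersection if $C \subset \mathrm{Supp}(D)$. For the converse, given such a $D$ with $D \cdot C = 0$ and $C$ not a fixed component, I would show that $[D]$ spans the second extremal ray of the effective cone and is semiample: the base locus is disjoint from $C$, and on a surface this local freeness together with $[C]$ spanning the other extremal ray closes up the Mori chamber decomposition, yielding finite generation of the section ring and hence of $\Cox(X)$.

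The main obstacle I expect is the semiampleness at the ``good'' ray: producing an effective $D$ with $D \cdot C = 0$ only gives a nef boundary class, and upgrading nef to semiample is exactly where Mori dream behavior can fail (this is the phenomenon Keel exploits). The delicate step is therefore to show that the \emph{existence} of one effective representative with $C$ not a fixed component forces the whole ray to be semiample — this is where I would lean on the surface geometry (Zariski decomposition, and the observation that once $C$ is removed the residual linear system is base-point free along $C$) and on the cited results of Huneke and Cutkosky, who relate this precisely to finite generation of the symbolic Rees algebra. I would flag that without the negative curve $C$ the dichotomy breaks down, which is why the hypothesis $C^2 < 0$ is essential to the statement.
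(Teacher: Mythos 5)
Your overall strategy is the same as the paper's: with Picard rank $2$, the Mori cone is $\NE(X)=\bR_{\geq0}[C]+\bR_{\geq0}[E]$, the nef cone is spanned by the pullback $H$ of the ample generator of $\Cl(\bP)$ (which is automatically semiample) and the ray $R=C^{\perp}\cap\NE(X)$, and since on a surface $\Mov(X)=\Nef(X)$ and there are no nontrivial SQMs, $X$ is a MDS if and only if $R$ is spanned by a semiample class. Your forward direction is fine in substance, but the justification you give for $D$ avoiding $C$ is wrong as stated: a nef divisor with $D\cdot C=0$ can perfectly well contain $C$ in its support (take $D=C+A$ with $A$ effective and $A\cdot C=-C^{2}>0$), so no contradiction arises. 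The correct argument is simply that a semiample class on $R$ has a base-point-free multiple, and a base-point-free linear system has no fixed components whatsoever.

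The genuine gap is the converse, which you flag yourself but then defer to the wrong tools. Zariski decomposition does nothing here: the class of $D$ already lies on the nef ray $R$, so it equals its own positive part; and invoking ``the cited results of Huneke and Cutkosky'' is circular, since the lemma is precisely the statement attributed to them. The missing step is short. Because $C$ is not a fixed component of $|D|$, we may assume $C\not\subset\mathrm{Supp}(D)$; write $mD=M+F$ with $F$ the fixed part and $M$ the moving part of $|mD|$, where $m$ is large enough that $M\neq0$ (this is possible because $R\cdot C=0$ and $R$ nef force $R^{2}>0$ by nondegeneracy of the intersection form, so $D$ is nef and big and $h^{0}(mD)$ grows; note $C$ is not a fixed component of $|mD|$ since $mD$ itself is a member avoiding $C$). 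Then $F\cdot C\geq0$ because $C\not\subset\mathrm{Supp}(F)$, and $M\cdot C\geq0$ because $M$ has no fixed components, hence is nef; since the sum is $mD\cdot C=0$, both vanish and $[M]$ spans $R$. Now $|M|$ has no fixed components, so its base locus is finite, and \emph{Zariski's theorem} (Fujita--Zariski, \cite[2.1.32]{Laz}) --- not Zariski decomposition --- says that a linear system with finite base locus is semiample. This is exactly the nef-to-semiample upgrade you correctly identified as the delicate point, and it is where the hypothesis that $C$ is not a fixed component enters. A smaller but recurring issue: you conflate the effective and nef cones (``effective cone \ldots with semiample boundary rays'', ``$[D]$ spans the second extremal ray of the effective cone''). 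The extremal rays of the effective cone are $[C]$ and $[E]$, rigid negative classes that are never semiample; the ray that matters, $R$, is a boundary ray of the nef cone lying in the interior of the effective cone.
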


\begin{proof}
Since $C^2<0$, it follows that $C$ generates an extremal ray of the Mori cone $\NE(X)$ and hence, $\NE(X)=\bR_{\geq0}\{C, E\}$. The nef cone is generated by $H$ and the ray $R$ in $\NE(X)$ defined by $R\cdot C=0$, $R\cdot E>0$. Then $X$ is a MDS if and only if $R$ is generated by a semiample divisor. This proves the ``only if" implication. If there is an effective divisor $D$ as in the lemma, we may replace $D$ with a divisor that has no fixed components and $D$ is semiample by Zariski's theorem (\cite[2.1.32]{Laz}).
\end{proof}

\begin{remark}
As observed by Cutkosky \cite{Cutkosky}, if $\ch k>0$ and $X=\Bl_e\bP$ contains a negative curve, then $X$ is always a MDS due to Artin's contractability criterion \cite{Artin}. 
\end{remark}

Let now $(a,b,c)=(7m-3, 5m^2-2m, 8m-3)$, $m\geq4$, $m\not\equiv0 \mod 3$. 
Let $C$ be the proper transform on $X$ of the curve $y^3=x^mz^m$ in $\bP$. The class of $C$ in $\Cl(X)$ is 
$$C=3(5m^2-2m)H-E.$$ 
Note that $C$ is an irreducible curve with $C^2<0$. If $D\in\NE(X)$ is such that $D\cdot C=0$, the class of $D$ equals
$$D_d:=d(7m-3)(8m-3)H-3dE,$$
for some positive integer $d$. 

\medskip

Consider the set $\I$ of effective Weil divisors $D$ on $X$ such that  $D\cdot C=0$ and $D$ does not contain $C$ as a fixed component. A crucial fact is the following:

\begin{proposition}\cite{GNW}\label{MaxNoether} 
The set 
$$I=\{d\in\bZ_{\ge0}\quad |\quad \exists D\in\I,\ [D]=D_d\}$$
equals $\bZ_{\ge0}d_0$ for some non-negative integer $d_0$. 
\end{proposition}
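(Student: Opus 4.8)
The plan is to show that the set $I$ is closed under addition, so that it forms a numerical subsemigroup of $\bZ_{\ge 0}$, and then to verify it is in fact \emph{saturated} in the sense that it is closed under the operation of subtracting: if $d, d' \in I$ with $d > d'$, then $d - d' \in I$. Any subsemigroup of $\bZ_{\ge 0}$ that is additionally closed under such differences is generated by a single element $d_0$ (namely its smallest positive element, if any), which gives the conclusion $I = \bZ_{\ge 0} d_0$. So the real content is these two closure properties.

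First I would establish that $I$ is additively closed. Given $D \in \I$ with $[D] = D_d$ and $D' \in \I$ with $[D'] = D_{d'}$, the sum $D + D'$ is effective and satisfies $(D+D') \cdot C = 0$ since $D_d \cdot C = D_{d'} \cdot C = 0$ by construction. The point requiring care is that $D + D'$ must not contain $C$ as a fixed component. Since $[D+D'] = D_{d+d'}$ and neither $D$ nor $D'$ has $C$ in its fixed locus, I would argue that the generic behavior along $C$ is controlled: if $D + D'$ contained $C$ as a fixed component, then every effective divisor in the class $D_{d+d'}$ would contain $C$, but $D$ and $D'$ give sections that are generically nonzero along $C$, and their product (viewed inside the symbolic Rees algebra $R_s(\gp) \subset \Cox(X)$) is generically nonzero along $C$ as well. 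This is cleanest if I pass to the description via $R_s(\gp)$: an element of $\I$ in class $D_d$ corresponds to a section whose restriction to $C$ is not identically zero, and multiplication of such sections stays nonzero on $C$ because the coordinate ring of $C$ is a domain. Thus $I$ is a semigroup.

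The harder direction, and the step I expect to be the main obstacle, is the difference property: from $D, D' \in \I$ in classes $D_d, D_{d'}$ with $d > d'$, I must produce an effective divisor in class $D_{d-d'}$ not containing $C$. This is exactly where a Max Noether type ``$AF+BG$'' argument enters, as flagged in Remark (2) of the introduction. The strategy is to restrict the relevant sections to the curve $C$: because $D' \cdot C = 0$, the section cutting out $D'$ restricts to a nowhere-vanishing section of the corresponding line bundle on $C$ (after normalizing $C$), hence is a unit on $C$. One then shows, via the weighted projective plane version of Max Noether's theorem, that the section of $D_d$ is divisible by the section of $D_{d'}$ modulo the ideal of $C$, so the quotient defines an effective divisor in class $D_{d-d'}$ whose restriction to $C$ is again nonzero, i.e.\ it does not contain $C$ as a fixed component. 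Making the divisibility statement precise — that a global section vanishing appropriately is a multiple of another — is the delicate point and is precisely the place where the arithmetic of the weighted plane $\bP(a,b,c)$ and the structure of $\gp$ must be used.

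Assembling these, $I \subset \bZ_{\ge 0}$ is a subset closed under addition and under subtraction of a smaller element from a larger one. If $I = \{0\}$ we take $d_0 = 0$; otherwise let $d_0$ be the smallest positive element of $I$. The difference property forces every element of $I$ to be a multiple of $d_0$ (by the Euclidean algorithm, repeatedly subtracting $d_0$), and the semigroup property gives all multiples of $d_0$, so $I = \bZ_{\ge 0} d_0$, as claimed. I would present the semigroup and difference closure as the two lemmas carrying the weight, then conclude with this elementary numerical argument.
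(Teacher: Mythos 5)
Your proposal is correct and takes essentially the same route as the paper: the paper also rests everything on the difference property, obtained by applying its weighted-projective-plane ``AF+BG'' theorem (Cor.~\ref{MN}) to write $h=Ac+Bg$, so that the proper transform of $V_+(B)$ is the required divisor of class $D_{d-d_0}$, and Noether's condition is verified by exactly your unit/vanishing-order argument on the negative curve (Lemma~\ref{N condition}, using that $D\cdot C=0$ forces $D$ and $C$ to be disjoint on the blow-up, so all intersection of the underlying plane curves is concentrated at $e$ with multiplicities $3d\ge 3d_0$). The only organizational difference is that the paper subtracts the minimal element $d_0$ and inducts, leaving the trivial inclusion $\bZ_{\ge0}d_0\subseteq I$ (your additive-closure step) implicit.
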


We will prove Prop.~\ref{MaxNoether} using a version of Max Noether's ``AF+BG" theorem \cite[p. 61]{Fulton} that holds for weighted projective planes. Note that  $\I$ and $I$ depend on the field $k$. 
We will write $\I_k$ whenever we need to specify the field $k$. 

\begin{definition}
Let $f, g\in S$ and  $\gp$ be a prime ideal in $S$ which is a minimal prime of the ideal $(f,g)$. We say that \emph{$h\in S$ satisfies Noether's condition at the prime ideal $\gp$ (with respect to $f,g$)} if $h\in(f,g)S_{\gp_i}$. 
\end{definition}

\begin{proposition}[AF+BG theorem]\label{MaxNoether WPP}
Let $f, g, h\in S$. Assume that the minimal primes $\gp_1,\ldots, \gp_s$ of the ideal $(f,g)$ all have height $2$. If $h$ satisfies Noether's condition at $\gp_i$ for all $i=1,\ldots, s$, then $h\in(f,g)$. 
\end{proposition}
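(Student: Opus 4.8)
The plan is to recast the statement as a question about a single module and its associated primes, reducing everything to two standard pieces of commutative algebra. Write $M = S/(f,g)$ and let $\bar h \in M$ denote the class of $h$. Then $h \in (f,g)$ is literally the assertion $\bar h = 0$, while Noether's condition at $\gp_i$ — that $h \in (f,g)S_{\gp_i}$ — says precisely that the image of $\bar h$ in the localization $M_{\gp_i} = S_{\gp_i}/(f,g)S_{\gp_i}$ vanishes. So the proposition becomes: if $\bar h$ dies in $M_{\gp_i}$ for every $i$, then $\bar h = 0$.

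The first key input is an unmixedness statement: I claim the minimal primes $\gp_1, \ldots, \gp_s$ are exactly the associated primes of $M$, i.e. $(f,g)$ has no embedded primes. Here I use that $S = k[x,y,z]$ is regular, hence Cohen--Macaulay, and that by hypothesis $\operatorname{ht}(f,g) = 2$ equals the number of generators. By Macaulay's unmixedness theorem — equivalently, $f,g$ form a regular sequence, so $M$ is Cohen--Macaulay of dimension $1$ and therefore has no embedded primes — every associated prime of $M$ has height $2$ and is minimal over $(f,g)$. Thus $\operatorname{Ass}(M) = \{\gp_1, \ldots, \gp_s\}$.

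The second key input is the general fact that, for a finitely generated module $M$ over a Noetherian ring, the localization map $M \to \prod_{\gp \in \operatorname{Ass}(M)} M_{\gp}$ is injective. Indeed, if $0 \neq m \in M$, then the nonzero submodule $Sm$ has an associated prime $\gp \in \operatorname{Ass}(Sm) \subseteq \operatorname{Ass}(M)$, say $\gp = \operatorname{Ann}(rm)$ with $rm \neq 0$; any $s \notin \gp$ with $sm = 0$ would satisfy $s(rm) = 0$, forcing $s \in \gp$, a contradiction, so $m$ survives in $M_{\gp}$. Combining the two inputs finishes the proof: $\bar h$ dies in $M_{\gp_i}$ for all $i = 1, \ldots, s$, and these are all the associated primes of $M$, so $\bar h = 0$, i.e. $h \in (f,g)$.

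The part that needs the most care — and the only place the hypotheses are genuinely used — is the unmixedness input. Without the assumption that all minimal primes of $(f,g)$ have height $2$, the ideal could acquire embedded primes, or $f,g$ could share a common factor and produce a height-$1$ component; then vanishing of $\bar h$ at the minimal primes alone would not force $\bar h = 0$. I note that the weighted grading on $S$ plays no role in this argument, since the statement and all the ring theory take place in the ungraded polynomial ring $k[x,y,z]$; the grading only matters afterwards, when one wishes to control the degrees in an expression $h = af + bg$.
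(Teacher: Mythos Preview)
Your proof is correct and follows essentially the same approach as the paper's: both invoke the Unmixedness Theorem (via the Cohen--Macaulay property of $S$) to identify the associated primes of $S/(f,g)$ with the minimal primes $\gp_1,\ldots,\gp_s$, and then use this to pass from the local conditions $h\in(f,g)S_{\gp_i}$ to the global conclusion $h\in(f,g)$. The only cosmetic difference is that the paper does the last step by hand---building an explicit $u\notin\bigcup_i\gp_i$ with $uh\in(f,g)$ and observing $u$ is a non-zero-divisor in $S/(f,g)$---whereas you appeal directly to the injectivity of $M\to\prod_{\gp\in\operatorname{Ass}(M)}M_\gp$, which is the abstract principle behind that construction.
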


\begin{proof}
As $h\in(f,g)S_{\gp_i}$, there exist $u_i\in S\setminus\gp_i$ such that $u_ih\in(f,g)$.  
For each $i$ we can find elements 
$y_i\in \cap_{j\neq i}\gp_j\setminus\gp_i$. Then $u:=\sum u_i y_i\notin\gp_i$ for any $i$ and $uh\in (f,g)$. Since $S$ is Cohen-Macaulay, by the Unmixedness Theorem \cite[Cor. 18.14]{Eisenbud}, all the associated primes of $(f,g)$ are minimal. Hence, the zero divisors of $S/(f,g)$ consist of elements from $\gp_i$'s. It follows that $u$ is not a zero divisor in $S/(f,g)$, hence
$h\in (f,g)$. 
\end{proof}

\begin{corollary}\label{MN}
If $F=V_+(f)$, $G=V_+(g)$ are curves in $\bP$ with no common components and $h\in S$ satisfies Noether's condition at each point of $F\cap G$, then $h=Af+Bg$, for some $A,B\in S$. 
\end{corollary}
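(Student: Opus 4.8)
The plan is to deduce the corollary directly from Proposition~\ref{MaxNoether WPP}, so the real work is to translate the geometric hypotheses into the algebraic ones demanded there. The only nontrivial point is to verify that every minimal prime of the ideal $(f,g)\subset S=k[x,y,z]$ has height $2$, and that these minimal primes are exactly the homogeneous primes attached to the points of $F\cap G$.

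First I would dispose of the degenerate cases: if $f$ or $g$ is a nonzero constant then $(f,g)=S$ and $h=Af+Bg$ trivially, so I may assume $f,g$ are homogeneous of positive degree which, by the hypothesis that $F$ and $G$ have no common component, share no common irreducible factor of positive degree. Next I would pin down the minimal primes of $(f,g)$. Since $f,g$ are homogeneous, every minimal prime of $(f,g)$ is homogeneous; since $(f,g)$ is generated by two elements, Krull's height theorem forces each such minimal prime to have height at most $2$. On the other hand $S$ is a UFD, so every height-$1$ prime is principal, say $(p)$ with $p$ irreducible; a height-$1$ prime containing $(f,g)$ would force $p\mid f$ and $p\mid g$, contradicting that $f,g$ have no common factor, while the zero ideal cannot contain the nonzero $f$. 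Hence every minimal prime of $(f,g)$ has height exactly $2$. In particular the irrelevant ideal $\gm=(x,y,z)$, of height $3$, is not among them, so $V_+(f,g)=F\cap G$ is a nonempty finite set whose points $P_1,\dots,P_s$ correspond bijectively to the homogeneous height-$2$ minimal primes $\gp_1,\dots,\gp_s$ of $(f,g)$ under the usual $\Proj$ dictionary.

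With the minimal primes identified, the hypothesis that $h$ satisfies Noether's condition at every point of $F\cap G$ says precisely that $h\in(f,g)S_{\gp_i}$ for each $i=1,\dots,s$, i.e.\ that $h$ satisfies Noether's condition at each $\gp_i$ in the sense of the Definition preceding Proposition~\ref{MaxNoether WPP}. Since all the $\gp_i$ have height $2$ and these are all the minimal primes of $(f,g)$, Proposition~\ref{MaxNoether WPP} applies verbatim and yields $h\in(f,g)$, that is, $h=Af+Bg$ for some $A,B\in S$, as claimed.

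I expect the only step requiring genuine care to be the height computation, specifically ruling out that $\gm$ is a minimal prime of $(f,g)$ (equivalently, that $F$ and $G$ fail to meet on $\bP$). This is exactly where the UFD structure of $k[x,y,z]$ together with Krull's theorem does the work; geometrically it reflects the fact that on the projective surface $\bP(a,b,c)$, whose class group has rank one, two curves with no common component must intersect in a nonempty finite set of points. Everything else is a routine unwinding of the definitions.
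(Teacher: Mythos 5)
Your proposal is correct and takes exactly the route the paper intends: the paper states Cor.~\ref{MN} as an immediate consequence of Prop.~\ref{MaxNoether WPP}, leaving implicit precisely the details you supply (minimal primes of $(f,g)$ are homogeneous, have height exactly $2$ by Krull's theorem plus the UFD/no-common-component argument, and correspond bijectively to the points of $F\cap G$ under the $\Proj$ dictionary). Your verification that the irrelevant ideal cannot be a minimal prime, and hence that the hypotheses of the proposition are met, is the right and only point of substance.
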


\begin{lemma}\label{N condition}
Assume $F=V_+(f)$ and $G=V_+(g)$ are curves in $\bP$ with no common components, $F\cap G$ does not contain any of the torus invariant points, and $F$ is smooth along $F\cap G$. Let  $h\in S$ and let $G'=V_+(h)$. Assume that for all $p\in F\cap G$ we have:
$$\mult_p(G',F)\geq\mult_p(G,F).$$ 
Then $h$ satisfies Noether's condition at each point of $F\cap G$. 
\end{lemma}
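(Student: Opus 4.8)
The plan is to adapt the classical sufficient condition for Noether's theorem on a smooth surface---namely Fulton's case of a \emph{simple point} of $F$---to the weighted projective plane, the one new ingredient being that $\bP(a,b,c)$ is smooth away from its three torus-fixed points. Fix a point $p\in F\cap G$ with corresponding homogeneous prime $\gp_p\subset S$; since $F$ and $G$ have no common component, $\gp_p$ is a minimal prime of $(f,g)$ of height $2$, and the goal is to show $h\in(f,g)S_{\gp_p}$.

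First I would record the local geometry at $p$. Because $a,b,c$ are pairwise coprime, the only points of $\bP$ with nontrivial $\bG_m$-stabilizer are the three torus-fixed points; as $p$ is none of them (by hypothesis $F\cap G$ avoids them), the quotient map $\Spec S\setminus\{0\}\to\bP$ is a $\bG_m$-torsor near the orbit over $p$, and in particular $\bP$ is smooth at $p$. Consequently $S_{\gp_p}$ is a regular local ring of dimension $2$ (indeed $S$ is regular everywhere), and deciding the membership $h\in(f,g)S_{\gp_p}$ is equivalent to the corresponding local ideal membership on the smooth surface $\bP$ at $p$.

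Next I would exploit that $F=V_+(f)$ is smooth along $F\cap G$. Smoothness of $F$ at $p$ means that $f$ is part of a regular system of parameters of $S_{\gp_p}$, so $R:=S_{\gp_p}/(f)$ is a one-dimensional regular local ring, i.e.\ a discrete valuation ring; call its valuation $v$. (Concretely $R$ is the local ring of the cone $V(f)$ along the orbit over $p$, and it is a DVR precisely because $F$ is smooth at $p$.) Writing $\bar g,\bar h$ for the images of $g,h$ in $R$, the membership I want is equivalent to $\bar h\in(\bar g)R$, which in a DVR holds if and only if $v(\bar h)\ge v(\bar g)$. The final step is to identify these valuations with the intersection multiplicities in the statement: on the smooth surface $\bP$, with $F$ smooth at $p$, the intersection multiplicity of a curve with the smooth curve $F$ at $p$ equals the order of vanishing at $p$ of its defining equation restricted to $F$, so $v(\bar g)=\mult_p(G,F)$ and $v(\bar h)=\mult_p(G',F)$. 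The hypothesis $\mult_p(G',F)\ge\mult_p(G,F)$ then yields $v(\bar h)\ge v(\bar g)$, hence $\bar h\in(\bar g)R$ and therefore $h\in(f,g)S_{\gp_p}$, which is exactly Noether's condition at $p$.

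The main obstacle is the bookkeeping in the last two steps: one must check that the weighted/graded structure does not interfere, i.e.\ that the valuation on the localized graded ring $S_{\gp_p}/(f)$ genuinely computes the geometric intersection multiplicity $\mult_p(\cdot,F)$ on $\bP$, and that the homogeneous membership $h\in(f,g)S_{\gp_p}$ is equivalent to the purely local statement on the surface at $p$. Both reductions rest on the single geometric input that, since $a,b,c$ are pairwise coprime, the $\bG_m$-action is free away from the torus-fixed points; this makes $\bP$ smooth near $p$ and allows Fulton's simple-point argument to run essentially verbatim. Note that all three hypotheses are used exactly once: no common component gives $\height\gp_p=2$, avoidance of the torus-fixed points gives smoothness of the ambient $\bP$, and smoothness of $F$ makes $R$ a DVR.
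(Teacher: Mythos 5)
Your proposal is correct and is essentially the paper's own argument: both run Fulton's simple-point case of Max Noether's theorem, using that smoothness of $F$ at $p$ makes the relevant one-dimensional local ring a DVR, in which the hypothesis $\mult_p(G',F)\geq\mult_p(G,F)$ becomes the divisibility $\bar h\in(\bar g)$ and hence $h\in(f,g)S_{\gp}$. The only difference is bookkeeping: where you invoke the free $\bG_m$-action (pairwise coprimality of $a,b,c$) to identify the valuation on $S_{\gp}/(f)$ with the intersection multiplicity on $\bP$, the paper instead dehomogenizes explicitly by the degree-one unit $r=x^{m_1}y^{m_2}$, $m_1a+m_2b=1$, works in $\cO_{\bP,p}=S_{(\gp)}$ and its quotient $\cO_{F,p}$, and clears the unit $r$ at the end.
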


\begin{remark}
Note that this lemma includes the ``classical'' 
case when $F$ and $G$ intersect transversally (and away from torus fixed points)
and $G'$ passes through all points in $F\cap G$.
\end{remark}

\begin{proof}
Let $p\in F\cap G$ with the corresponding homogeneous prime ideal~$\gp$. By assumption, at least two of $x,y,z$ are not in $\gp$. Say $x,y\notin\gp$. Since $a,b$ are coprime, let $m_1,m_2$ be integers such that $m_1a+m_2b=1$. Let $r=x^{m_1}y^{m_2}$. Note that $r$ is a unit in $S_{xy}$.  For $f\in S_d$, denote $f_1=f/r^d\in S_{(xy)}$. Consider the functions $f_1, g_1,  h_1$ corresponding to $f,g,h$. Denote by $t$ a generator of the maximal ideal of  $\cO_{C,p}=\cO_{\bP,p}/(f_1)$. If $\overline{g}_1$, $\overline{h}_1$ denote the images of $g_1$, $h_1$ in $\cO_{C,p}$, we have $\overline{g}_1=ut^n$, $\overline{h}_1=vt^m$, for units $u,v\in\cO_{C,p}$ and with $n=\mult_p(G,F)$,  $m=\mult_p(G',F)$. As $m\geq n$, it follows that $\overline{h}_1\in(\overline{g}_1)$, i.e., $h_1\in(f_1,g_1)\subseteq\cO_{\bP,p}=S_{(\gp)}$. Since $x,y\notin\gp$, it follows that $h\in(f,g)S_{\gp}$. 
\end{proof}

\begin{proof}[Proof of Prop. \ref{MaxNoether}]
Assume $\I\neq\emptyset$ and let $d_0$ be the smallest positive integer in $I$. Let $g\in S$ be such that the proper transform $D$ in $X$ of $G:=V_+(g)\subset\bP$ has class $D_{d_0}$ and such that $D$
does not contain $C$.
Let $d\in I$, $d>0$. Let $h\in S$ be such that the proper transform $D'$ of $G':=V_+(h)$ has class $D_{d}$
 and such that $D'$ does not contain $C$.
Recall that $C$ is the proper transform in $X$ of $F:=V_+(f)$, where $f=y^3-x^mz^m$. Since $D\cdot C=0$, $D$ and $C$ are disjoint in $X$, but $G$ and $F$ intersect only at $e$ in $\bP$ and we have:
$$\mult_e(G,F)=\mult_e(G)=3d_0.$$ 

Similarly, $\mult_e(G',F)=3d$. Since $d\geq d_0$, by Lemma \ref{N condition},  $h$ satisfies Noether's condition (with respect to $f,g$). By Cor. \ref{MN}, $h=Af+Bg$ for some  $A,B\in S$.  If $D_1$ denotes the proper transform in $X$ of $V_+(B)$, note that $[D']=[D]+[D_1]$. It follows that $D_1\in\I$ and so $d-d_0\in I$. The statement now follows by induction.  
\end{proof}

\begin{lemma}\cite{GNW}\label{D_p}
 Assume $\ch k=p\geq3$. Then there exists $D\in\I_k$ with class $D_p$. 
\end{lemma}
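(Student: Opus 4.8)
The goal of Lemma \ref{D_p} is to exhibit, in characteristic $p\geq 3$, an explicit effective divisor $D\in\I_k$ with class $D_p=p(7m-3)(8m-3)H-3pE$ that meets the negative curve $C$ with multiplicity zero and does not contain $C$ as a fixed component. Recall that $C$ is the proper transform of $F=V_+(y^3-x^mz^m)$, meeting $F$ only at the torus-identity point $e$ with $\mult_e(F)=3$.

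\begin{proof}[Plan of proof]
The plan is to produce a single homogeneous polynomial $h\in S=k[x,y,z]$ whose proper transform $D'=V_+(h)$ has the correct class $D_p$ and is disjoint from $C$ away from the blow-up, i.e.\ such that $V_+(h)$ meets $F$ only at $e$ with the right multiplicity $3p$. The passage to characteristic $p$ is the whole point: I expect the Frobenius to be the mechanism that forces the geometry to work. First I would look for $h$ of the shape built out of Frobenius twists of the two generators $x^mz^m$ and $y^3$ of the ideal cutting out $F$ at $e$; concretely, the natural candidate is a polynomial expressing the fact that modulo $p$ one can take $p$-th powers, so that $h$ is a combination of $x^{mp}z^{mp}$, $y^{3p}$, and lower terms that vanish to high order along $F$. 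The degree bookkeeping is the first routine check: $h$ should have $\deg h=p(7m-3)(8m-3)$ in the weighted grading, matching $dA$ with $d=p$, and its vanishing at $e$ should be exactly $3p=\mult_e(G',F)$ so that $D'\cdot C=0$.

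The key step is to verify that the only intersection of $V_+(h)$ with $F$ in $\bP$ is at $e$, and that there $\mult_e(G',F)=3p$, while ensuring $F\not\subset V_+(h)$ so that $C$ is not a fixed component of $D'$. The clean way to arrange this is to choose $h$ so that it restricts on the affine chart of $F$ near $e$ to a unit times $t^{3p}$ in the local ring $\cO_{C,e}$, which is precisely the hypothesis fed into Lemma \ref{N condition}; in characteristic $p$ the relation $y^3\equiv x^mz^m$ along $F$ can be raised to the $p$-th power freely, and the $p$-th power map on the local parameter $t$ behaves additively, which is what lets the multiplicity come out to the exact value $3p$ rather than something larger. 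I would then read off the class: since $V_+(h)$ has degree $p(7m-3)(8m-3)$ and passes through $e$ to order $3p$ without containing $F$, its proper transform $D'$ has class $p(7m-3)(8m-3)H-3pE=D_p$, and $D'$ is effective by construction.

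The hard part will be exhibiting the polynomial $h$ explicitly and confirming that it does not acquire extra intersection with $F$ at points other than $e$, including away from the torus (one must check the three torus-fixed points and the coordinate lines do not contribute spurious intersection, since $F$ itself avoids the singular points of $\bP$ along $F\cap V_+(h)$). This is genuinely characteristic-$p$ phenomenology: in characteristic zero no such $h$ of this degree exists (that is exactly the content of the non-finite-generation in Thm.\ \ref{GNW theorem}), so the construction cannot be a formal manipulation and must use Frobenius in an essential way. Once $h$ is in hand, the verification that $\mult_e(G',F)=3p$ reduces to a local computation in $\cO_{C,e}$ of the type carried out in the proof of Lemma \ref{N condition}, and the conclusion $D'\in\I_k$ with $[D']=D_p$ is immediate.
\end{proof}
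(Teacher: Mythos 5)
Your proposal correctly identifies the target of the lemma --- a homogeneous $h$ of degree $p(7m-3)(8m-3)$ vanishing to order $3p$ at $e$ (i.e., $h\in\gp^{(3p)}$) with $c\nmid h$, where $c=y^3-x^mz^m$ --- and correctly guesses that the Frobenius is the mechanism that makes this possible only in characteristic $p$. But the proposal stops exactly where the proof begins: the entire mathematical content of the lemma is the explicit construction of $h$, which you defer as ``the hard part.'' Worse, the concrete candidate you suggest cannot work. First, a combination of $y^{3p}$ and $x^{mp}z^{mp}$ has weighted degree $3p(5m^2-2m)=p(15m^2-6m)$, not $p(7m-3)(8m-3)=p(56m^2-45m+9)$, so no such combination has the class $D_p$; note also that $y^3$ and $x^mz^m$ are not ``generators of the ideal cutting out $F$ at $e$'' --- the ideal of the point $e$ is $\gp$, which requires the three generators $a=z^{3m-1}-x^{2m-1}y^2$, $b=x^{3m-1}-yz^{2m-1}$, and $c$. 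Second, and more fundamentally, the only combination $\alpha y^{3p}+\beta x^{mp}z^{mp}$ that vanishes to high order along $F$ is (in characteristic $p$) $\alpha(y^{3p}-x^{mp}z^{mp})=\alpha c^p$, which is divisible by $c$ --- precisely what must be avoided. The tension between ``lies in $\gp^{(3p)}$'' and ``is not divisible by $c$'' is the whole difficulty, and applying Frobenius to $c$ alone can never resolve it. (By contrast, the worry you flag about spurious intersections of $V_+(h)$ with $F$ away from $e$ is automatic: once $\deg h$ and the multiplicity at $e$ are right, the total intersection number $3p$ is already accounted for at $e$.)

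The paper's proof (following Goto--Nishida--Watanabe) resolves this tension by working with the full generator set $a,b,c$ of $\gp$: it introduces explicit auxiliary elements $d_2\in\gp^{(2)}$ and $d_3,d_3'\in\gp^{(3)}$ satisfying explicit syzygies, raises the syzygy $xd_3+ybc^2+zd_3'=0$ to the $p$-th power, and then --- using the further relation $x^ma+y^2b+z^{2m-1}c=0$, the expansion with $p=2q+1$, and the numerical fact that $m(q-i)\geq p$ or $(2m-1)i\geq p$ for each $0\leq i\leq q$ (this is where $m\geq 4$ enters) --- extracts an identity $z^ph=d_3^p+(\hbox{terms manifestly in }\gp^{(3p)})$. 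This yields $h\in\gp^{(3p)}$ of the correct degree, and $c\nmid h$ follows because $c\nmid d_3$. None of this is a routine verification that can be waved through; without it, the lemma is unproven.
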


\begin{proof}
We recall from \cite[p. 390]{GNW}  the construction of a polynomial $h\in\gp^{(3p)}$ of degree $p(7m-3)(8m-3)$ such that  $c\nmid h$. The ideal $\gp$ contains  polynomials $u$, $v$ and $f$, where 
$$u=z^{3m-1}-x^{2m-1}y^2,\quad  v=x^{3m-1} -yz^{2m-1}, \quad f=y^3-x^mz^m$$
(in fact $u,v,f$ generate $\gp$ by the Hilbert--Burch theorem but we don't need this).
Let 
$$d_2=x^{m-1}y^5z^{m-1}-3x^{2m-1}y^2z^{2m-1}+x^{5m-2}y+z^{5m-2},$$
$$d_3=-x^{3m-2}y^7+2x^{m-1}y^5z^{3m-1}+x^{4m-2}y^4z^m-5x^{2m-1}y^2z^{4m-1}+$$
$$+3x^{5m-2}yz^{2m}-x^{8m-3}z+z^{7m-2},$$
$$d'_3=y^8z^{2m-2}-4x^my^5z^{3m-2}+x^{4m-1}y^4z^{m-1}+6x^{2m}y^2z^{4m-2}-$$
$$-4x^{5m-1}yz^{2m-1}+x^{8m-2}-xz^{7m-3}.$$

A direct computation shows:
$$x^md_2-yv^2+z^{m-1}uf=0,$$
$$x^{m-1}v^2f+ud_2-z^{m-1}d_3=0,$$
$$xd_3+yvf^2+zd'_3=0.$$

It follows that $d_2\in\gp^{(2)}$ and $d_3, d'_3\in\gp^{(3)}$. Note also that $f\nmid d_3$. Since $\ch k =p$, we get from the third equation that
$$x^pd_3^p+y^pv^pf^{2p}+z^p{d'_3}^p=0.$$

Write $p=2q+1$ for some integer $q>0$. Since 
$$x^pd_3^p+y^pv^pf^{2p}\equiv0 \mod (z^p),\quad x^mu+y^2v+z^{2m-1}f=0,$$
 
it follows that 
$$x^pd_3^p+y^pv^pf^{2p}=x^pd_3^p+(-1)^qyv^{p-q}f^{2p}\big(x^mu+z^{2m-1}f\big)^q=$$
$$=x^pd_3^p+(-1)^q\sum_{i=0}^q{q \choose{i}}x^{m(q-i)}yz^{(2m-1)i}u^{q-i}v^{p-q}f^{2p+i}$$
$$\equiv0 \mod (z^p).$$ 

Notice that either $m(q-i)\geq p$ or $(2m-1)i\geq p$ for each $0\leq i\leq q$  (use $m\geq4$). Then 
$$x^pd_3^p+(-1)^q\sum_{(2m-1)i<p}{q \choose{i}}x^{m(q-i)}yz^{(2m-1)i}u^{q-i}v^{p-q}f^{2p+i}
\equiv0 \mod (z^p),$$ and therefore,
$$z^ph=d_3^p+(-1)^q\sum_{(2m-1)i<p}{q \choose{i}}x^{m(q-i)-p}yz^{(2m-1)i}u^{q-i}v^{p-q}f^{2p+i},$$
for some $h\in\gp^{(3p)}$. If $f\mid h$, then $f\mid d_3$, which is a contradiction. 
\end{proof}

\begin{proof}[Proof of Thm. \ref{GNW theorem}]
Assume that $X$ is a MDS in characteristic $0$. By Lemma \ref{MDS surface}, there exists an integer $d>0$ and a monic polynomial  $f\in S$ such that the proper transform $D$ in $X$ of $V_+(f)$ has class $D_d$ and $D$ does not contain $C$ as a fixed component. Since a multiple of $D$ is base-point free and $D$ is big, by eventually replacing $d$ with a multiple, we may assume by Bertini's theorem that $D$ is smooth and connected. 

Let $R$ be the $\bZ$-algebra generated by the coefficients of $f$. Let $\bP_R:=\Proj R[x,y,z]$ and $e_R$ be the section of  $\bP_R\rightarrow\Spec(R)$ corresponding to $\gp R[x,y,z]$. Let $\X_R$ be the blow-up of $\bP_R$ along $e_R$, with exceptional divisor $\cE$. Let $\sD$ be the proper transform of $V_+(f)\subset\bP_R$ in $\cX_R$. Since the geometric generic fiber of 
$\rho:\sD\rightarrow\Spec(R)$ is smooth and connected, by eventually replacing $R$ with a localization, we may assume that $\rho$ is smooth and all its geometric fibers $\sD_s$ are connected. Since $\rho$ is flat, $\deg\O(\cE)_{|\sD_s}$  does not depend on $s$. It follows that 
all $\sD_s$ have class $D_d$ and do not contain the curve $C$, i.e., for each $s\in\Spec(R)$, we obtain a divisor in $\cI_{\overline{k(s)}}$. For each prime $p$ in the image of the dominant map $\Spec R\rightarrow\Spec\bZ$, pick some $s_p\in\Spec(R)$. By Prop. \ref{MaxNoether},  there are integers $d_p$ such that $I_{\overline{k(s_p)}}=\bN\{d_p\}$.  Hence,  $d_p\mid d$ for sufficiently large primes $p$. As by Lemma \ref{D_p}, $d_p\mid p$ for all primes $p\geq3$, we must have that $d_p=1$ for all sufficiently large $p$. 

But one can see directly that $D_1$ is not effective in characteristic $0$ (and hence, in characteristic $p$, for $p$ large). To see this, note that we have the following:

\begin{claim}\label{monomials}
The only monomials in $S$ of degree $(7m-3)(8m-3)$ are
$$x^{m-1}y^5 z^{3m-2}, x^{4m-2}y^4 z^{m-1}, x^{2m-1}y^2 z^{4m-2},
x^{5m-2}y z^{2m-1}, x^{8m-3},  z^{7m-3}.$$
\end{claim}

\begin{proof}
To simplify notation, we let 
$$a=7m-3,\quad b=5m^2-2m,\quad c=8m-3.$$ 

Consider monomials $x^{\al}y^{\be}z^{\ga}$ of degree $ac$, i.e., with
$a\al+b\be+c\ga=ac.$ Since $3b=(a+c)m$, it follows that
$$a(3\al+m\be)+c(3\ga+m\be)=3ac.$$

In particular, $a | 3\ga+m\be$ and $c | 3\al+m\be$. 

Moreover, note that $0\leq\al\leq c$, $0\leq\ga\leq a$ and 
$$0\leq\be\leq  \frac{ac}{b}=\frac{(7m-3)(8m-3)}{5m^2-2m}<12.$$

If $\be=0$ then $a | 3\ga$, $c | 3\al$. Since $a$, $c$ are not divisible by $3$, it follows that 
$a | \ga$, $c | \al$ and therefore the only solutions are $\al=c, \ga=0$ and $\al=0, \ga=a$. 

Assume that $\be>0$. Note that for a fixed $\be>0$, there is at most one choice of $\al, \ga$. Indeed, if $a\al_1+c\ga_1=a\al_2+c\ga_2$, it follows from $a(\al_1-\al_2)=c(\ga_2-\ga_1)$ and $(a, c)=1$ that the only possibility is $\al_1=\al_2$ and  $\ga_1=\ga_2$. Moreover, 
as $c | 3\al+m\be$, there is $u\in\bZ_{>0}$ with 
$$cu=3\al+m\be.$$

Since $\al<c$, it follows that $cu<3c+m\be\leq 3c+11m$ and hence, 
$$u<3+\frac{11m}{c}=3+\frac{11m}{8m-3}<3+2=5.$$

Considering divisibility by $3$ in $cu=3\al+m\be$, we must have
$2u\equiv\be$ modulo $3$. Hence, the only possibilities are: $u=1, 4$, $\be=2, 5, 8, 11$; 
 $u=2$, $\be=1, 4, 7, 10$; $u=3$, $\be=3, 6, 9$. For fixed $u$ and $\be$, one computes $\al$ from $cu=3\al+m\be$. One can directly see that the only possibilities are $u=1$, $\be=2, 5$ and  $u=2$, $\be=1, 4$. 
\end{proof}

No linear combination of the monomials in Claim \ref{monomials} has all six derivatives of order $2$ vanishing at $e=(1,1,1)$. A direct computation shows that the determinant of the corresponding $6\times 6$ matrix is (up to a sign): 
 $$4(7m-3)^2(8m-3)^2(7m-4)(8m-4)(51m^2-43m+9).$$
Q.E.D.\end{proof}


\section{Proof of Theorem \ref{asdazxvsfvsfvsdaqwf}}\label{iff section}

We recall the elementary transformations of Maruyama \cite{Mar} in the generality that we need.
Let $X$ be a scheme of finite type over $k$, let $i:\,D\hookrightarrow X$ be an effective Cartier divisor, let $\cF$ be a locally free sheaf of rank $2$ on~$X$, and let $\cF|_D\to \cL$
be a surjection onto an invertible sheaf on~$D$. 
Then we have a commutative diagram: 
$$
\begin{CD}
&&0 && 0 \\
@. @AAA @AAA                 \\
0 @>>> i_*\cL' @>>> i_*(\cF|_D) @>>> i_*\cL @>>>0\\
@. @A{\pi'}AA @AAA                 @|\\
0 @>>> \cF' @>>> \cF @>\pi >> i_*\cL @>>>0\\
@. @AAA @AAA                 \\
&& \cF(-D) @= \cF(-D)                 \\
@. @AAA @AAA                 \\
&&0 && 0 \\
\end{CD}$$

The sheaf $\cF'$ is called an elementary transformation of~$\cF$.
It is a locally free sheaf of rank $2$. 
Geometrically, consider $\bP^1$-bundles $\bP(\cF)$ and $\bP(\cF')$, where say $\bP(\cF)=\bProj_{\cO_X}Sym(\cF)$.
Quotient maps $\pi$ and $\pi'$ give sections $s:\,D\to\bP(\cF|_D)$ and $s':\,D\to\bP(\cF'|_D)$.
Let $Z=s(D)$ and $Z'=s'(D)$ be their images.
Note that they are local complete intersections of codimension $2$.
We have a canonical isomorphism 
$$\Bl_Z\bP(\cF)\simeq\Bl_{Z'}\bP(\cF').$$
More concretely, $\bP(\cF')$ is obtained from $\Bl_Z\bP(\cF)$ by blowing down
the proper transform of the Cartier divisor $\bP(\cF|_D)$. Note that elementary transformations are functorial, i.e., for a map $g: Y \to X$, $\bP(g^*\cF')$ is the elementary transformation of $\bP(g^*\cF)$ along the data $(g^{-1}(D), g^*s)$.

\begin{lemma}\label{aux}
Let $p: Y\to X$ be a $\bP^1$-bundle and let $p': Y'\to X$ be an elementary transformation given by the data $(D, Z)$. Let $t: X\to Y$ be a global section and let $T'$ denote the proper transform of $T=t(X)$ in $Y'$. If $T$ and $Z$ agree over $D$, or if they are disjoint, then $T'$ is a section of $p'$.

Let now $t_1, t_2$ be two global sections and let  $T'_1$, $T'_2$ denote the proper transforms of $T_1=t_1(X)$, $T_2=t_2(X)$.  Assume $T_1$, $Z$ agree over $D$. 

\begin{itemize}
\item[(a) ] If $T_2$, $Z$ are disjoint, then  $T'_1$, $T'_2$ are disjoint over $D$. 
\item[(b) ] Assume $T_1$, $T_2$, $Z$ agree over $D$ and for some point $x\in D$ (with $X$, $D$ non-singular at $x$),  we have at $z=s(x)$ that 
$$T_{z, T_1}\cap T_{z, T_2}=T_{z, Z}\subseteq T_{z, Y}.$$  
Then $T'_1$, $T'_2$ are disjoint over $x$. 
\end{itemize}
\end{lemma}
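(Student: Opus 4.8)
The three assertions are all local over $X$ near $D$: over $X\setminus D$ the birational map $Y\dashrightarrow Y'$ is an isomorphism carrying each $T_i$ isomorphically to $T'_i$, so ``being a section'' and disjointness/agreement of the transforms only need to be checked over $D$. The plan is therefore to write down an explicit local model of the elementary transformation and simply read off the proper transforms of sections. Since $\bP(\cF)$ is Zariski-locally trivial and $D$ is Cartier, I would work on a trivializing open set where $D=\{u=0\}$ for a local equation $u$, where the fibre coordinate is an affine coordinate $\zeta$, and where the section $s$ has been chosen so that $Z=\{u=0,\ \zeta=0\}$.

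With this normalization the transformation is completely explicit. By the description recalled at the start of this section, $\bP(\cF')$ is obtained from $\Bl_Z\bP(\cF)$ by blowing down the proper transform of $p^{-1}(D)=\bP(\cF|_D)$. Computing the two charts of $\Bl_ZY$ (namely $\zeta=u\eta$ on one and $u=\zeta\nu$ on the other) and contracting the proper transform $\{\nu=0\}$ of $p^{-1}(D)$, one finds that $Y'$ carries the fibre coordinate $\eta=\zeta/u$ on one chart and $\eta''=1/\eta=u/\zeta$ on the other, that this agrees with $\zeta$ over $X\setminus D$, and that the exceptional section is $Z'=\{u=0,\ \eta''=0\}$, i.e. $\eta=\infty$ over $D$.

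The proper transforms are now immediate, and the key point is a duality. A section $T=\{\zeta=f\}$ that \emph{agrees} with $Z$ over $D$ satisfies $u\mid f$, say $f=ug$, so $T'=\{\eta=g\}$ is a section, lying at finite $\eta$ over $D$ and hence \emph{disjoint} from $Z'$ over $D$; conversely a section \emph{disjoint} from $Z$ over $D$ has $f|_{u=0}$ nowhere zero, so $T'=\{\eta''=u/f\}$ is a section lying on $Z'$ over $D$, i.e. \emph{agreeing} with $Z'$ over $D$ (the case where $T$ runs through $\zeta=\infty$ over $D$ is symmetric and again yields a section on $Z'$). This proves the first assertion, and records that the elementary transformation interchanges ``agrees with $Z$ over $D$'' and ``disjoint from $Z$ over $D$''. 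Note the hypotheses are genuinely needed: if $f|_{u=0}$ vanishes on a proper nonempty subset of $D$, then $T'$ acquires a vertical component and fails to be a section.

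Part (a) follows at once: $T'_1$ sits at finite $\eta$ over $D$ while $T'_2$ lies on $Z'=\{\eta=\infty\}$ over $D$, so they are disjoint over $D$. For part (b) both $T_i=\{\zeta=ug_i\}$, whence $T'_i=\{\eta=g_i\}$, and these are disjoint over $x$ precisely when $g_1(x)\ne g_2(x)$; it remains to show the tangent-space hypothesis is equivalent to this inequality. Using that $X$ and $D$ are smooth at $x$, I would take $u,x_2,\dots,x_n$ as local coordinates, so that at $p=s(x)$ one has $T_{p,Z}=\{\delta u=0,\ \delta\zeta=0\}$ and $T_{p,T_i}=\{\delta\zeta=g_i(x)\,\delta u\}$ inside $T_{p,Y}$; a one-line linear-algebra computation then shows $T_{p,T_1}\cap T_{p,T_2}=T_{p,Z}$ if and only if $g_1(x)\ne g_2(x)$. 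I expect this last translation of the intrinsic tangency condition into the coordinate statement $g_1(x)\ne g_2(x)$ to be the only real content; everything else is bookkeeping in the local model, and the main thing to get right is the identification of the transformed fibre coordinate $\eta=\zeta/u$ together with the position $\eta=\infty$ of the exceptional section $Z'$.
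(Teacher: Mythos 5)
Your proof is correct, but it takes a genuinely different route from the paper's. The paper argues intrinsically: for the first assertion, in the ``agree'' case it observes that the proper transform of $T$ in $\Bl_ZY$ is the blow-up of $T$ along $Z$ (a Cartier divisor in $T$), hence isomorphic to $T$ and disjoint from the divisor that gets contracted; in the ``disjoint'' case it uses the sheaf description, noting that the quotient $\cF\to\cM$ defining $T$ splits $\cF|_D\cong\cM|_D\oplus\cL$, so that $\cF'\to\cM$ is still surjective and $T'=\bP(\cM)$. For (a) and (b) the paper then uses functoriality of elementary transformations to restrict to a suitably chosen curve $C$ through $x$ (transverse to $D$ in case (b)), reducing everything to the classical picture of an elementary transformation of a ruled surface over a curve, where two sections meeting non-tangentially at the blown-up point are separated. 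You instead do everything in an explicit local chart: fibre coordinate $\eta=\zeta/u$, exceptional section $Z'$ at $\eta=\infty$, proper transforms of graph sections read off directly, and the tangency hypothesis in (b) translated by linear algebra into $g_1(x)\neq g_2(x)$. Your computation in effect inlines the classical ruled-surface facts that the paper quotes implicitly, and it makes explicit the pleasant ``duality'' (the transformation interchanges \emph{agrees with $Z$} and \emph{disjoint from $Z'$}), which the paper never states but which underlies both (a) and (b); you even prove an equivalence in (b) where only one implication is needed. What the paper's approach buys in exchange is coordinate-freeness (no need to normalize a trivialization, and cleaner bookkeeping when $X$ is singular away from $x$) and reusability: the same restrict-to-a-curve device reappears verbatim in the compatibility verification in the proof of Theorem 1.1, which is why the paper phrases the lemma's proof that way. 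One small caution: your parenthetical claim that a section whose defining function vanishes on a proper nonempty subset of $D$ yields a transform with a ``vertical component'' is loosely worded --- the transform is irreducible but contains whole fibres --- though this side remark plays no role in the argument.
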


The condition on tangent spaces in (b) is equivalent to the differentials ${dt_1}_{|x}$, ${dt_2}_{|x}$ not having the same image. Alternatively, there exists a curve $C$ in $X$ smooth at $x$, such that in the ruled surface $S:=p^{-1}(C)\to C$, the sections 
$T_1\cap S$ and $T_2\cap S$ are not tangent at $z$. 

\begin{proof}[Proof of Lemma \ref{aux}]
If $T$ and $Z$ agree along $D$, the proper transform $\tilde{T}$ in the blow-up $\tilde{Y}$ of $Y$ along $Z$ is isomorphic to $T$, as it is the blow-up of $T$ along $Z$ (a Cartier divisor in $T$). As $Y'$ is the blow-down of  $\tilde{Y}$ along the proper transform of 
$p^{-1}(D)$, which is disjoint from $\tilde{T}$, it follows that $T'$ is isomorphic to $\tilde{T}$, hence $T'$ is a section of $p'$.  

Assume that $T$ and $Z$ are disjoint. Set  $Y=\bP(\cF)$,  $Y'=\bP(\cF')$, for $\cF'$ the elementary transformation of $\cF$ along $\cF_{|D}\to\cL$ (corresponding to $Z$). The global section $T$ corresponds to a quotient $\cF\to\cM$. Since $T$ and $Z$ are disjoint,  the induced map $\cF_{|D}\to\cM_{|D}\oplus\cL$ is an isomorphism (hence, the first exact sequence in the commutative diagram relating $\cF$ and $\cF'$ is split). The induced map 
$\cF'\to i_*\cM_{|D}$ factors through $\cF'\to\cF\to\cM$. It follows that $\cF'\to\cM$ is surjective (it is enough to check this on $D$) and $T'=\bP(\cM)$, i.e., $T'$ is a section of $p'$. 

We now prove the second part of the lemma. As proved above, $T'_1$ and $T'_2$ are sections of $p'$. Assume we are in situation (a). We prove that $T'_1$, $T'_2$ are disjoint above any point $x\in D$. Consider a general curve $C$ in $X$ through $x$. By functoriality, the ruled surface $S=p^{-1}(C)\to C$ undergoes an elementary transformation given by data $(x, z)$, where $z=s(x)$. As the section $T_1$ passes through $z$, while $T_2$ does not, it follows immediately that $T'_1$, $T'_2$ are disjoint over $x$. Assume now that we are in situation (b).  As before, we reduce to the ruled surface case. We may choose $C$ a curve through $x$ that is transverse to $D$ at $x$ and let $S=p^{-1}(C)$. It follows that $\dim(T_{z,Z}\cap T_{z,S})=0$ and sections $T_1\cap S$, $T_2\cap S$ are transverse at $z$; hence,  $T'_1$, $T'_2$ are disjoint above $x$. 
\end{proof}

\begin{definition}\label{compatible}
Let $X$ be a non-singular variety and let $D_1,\ldots,D_N$ be irreducible divisors in $X$ with simple normal crossings. Assume that the intersections $D_{ij}:=D_i\cap D_j$ and $D_{ijk}:=D_i \cap D_j\cap D_k$ are irreducible or empty. We denote the interiors of these intersections by $D^0_{ij}$ and $D_{ijk}^0$, respectively. Let $p:\,Y\to X$ be a $\bP^1$-bundle.

A {\em compatible sequence of sections starting at $M$ (with respect to the ordered set $D_1,\ldots,D_N$)} is a sequence $Z_M\ldots, Z_N$, where $Z_i$ is the image of a section $s_i:\,D_i\to p^{-1}(D_i)$ ($i=M,\ldots, N$) 
such that the following conditions are satisfied:
\begin{enumerate}
\item For any $j>i\geq M$, if $D_{ij}\ne\emptyset$ then either 
\begin{enumerate}
\item $Z_i$ and $Z_j$ agree over $D_{ij}$, or
\item $Z_i$ and $Z_j$ are disjoint over $D_{ij}^0$, in which case the locus in $D_{ij}$ where $Z_i$ and $Z_j$ agree is either empty or it is a union of subsets $D_{ijk}$ for some indices $k$ 
such that $$M\le k< i.$$ 
Moreover, for such an index $k$, $Z_k$ agrees with $Z_i$ over $D_{ik}$, $Z_k$ agrees with $Z_j$ over $D_{jk}$, and, for any $z\in s_k(D_{ijk}^0)$, 
\begin{equation}\label{svsdasdvsdv}
T_{z, s_i(D_{ij})}\cap T_{z, s_j(D_{ij})}=T_{z, s_k(D_{ijk})}.
\cooltag\end{equation} 
\end{enumerate}
\item If $i, j, k\geq M$ are such that $D_{ijk}\ne\emptyset$, then there exists a subset $\{a,b\}$ of $\{i,j,k\}$
such that $Z_a$ and $Z_b$ agree over $D_{ab}$.
\end{enumerate}
\end{definition}

\begin{remarks}
(a) Def. \ref{compatible} gives sufficient conditions to iterate elementary transformations along a sequence of data (see Prop. \ref{mvcvcvcnb} - the role of $M$ being to help formulate the inductive step).  Note that a compatible sequence of sections starting at $M$, with respect to $D_1,\ldots, D_N$, is the same as a compatible sequence of sections starting at $1$, with respect to $D_M,\ldots, D_N$, appropriately reindexed (i.e., we ignore $D_1,\ldots, D_{M-1}$).  

(b) In general, when making an elementary transformation along $(D, Z)$, the proper transform of a section may not be a section. By Lemma \ref{aux}, this holds, however, when the section either agrees with $Z$, or is disjoint from it. Condition (1) in Def. \ref{compatible} guarantees that in a compatible sequence $Z_M,\ldots Z_N$, any $Z_i$ for $i>M$ either agrees with $Z_M$, or is disjoint from it. Hence, after the elementary transformation given by $(D_M, Z_M)$, the proper transform of $Z_i$ is still a section. 

(c) If  $Z_i$ and $Z_j$ are disjoint over $D_{ij}^0$, then $Z_i$ and $Z_j$ give distinct sections of the $\bP^1$-bundle $p^{-1}(D_{ij})\rightarrow D_{ij}$ and, hence, their intersection has pure codimension $4$ in $Y$, i.e., the locus $G$ where $Z_i$ and $Z_j$ agree, has pure codimension $3$ in $X$.  Moreover, as $G\subseteq D_{ij}\setminus D_{ij}^0=\cup_k D_{ijk}$, it follows that $G$ is a union of subsets $D_{ijk}$. 
Hence, condition (1)(b) simply states that one cannot have $k<M$ or $k\geq i$. 

(d) Condition (2) in Def. \ref{compatible} guarantees that in a compatible sequence $Z_M,\ldots Z_N$, if $j, i>M$, then either $Z_i$ and $Z_j$ agree over $D_{ij}$ (hence, after the elementary transformation given by $(D_M, Z_M)$, the proper transforms $Z'_i$ and $Z'_j$ still agree over $D_{ij}$) or, if not, then $Z_i'$ and $Z_j'$ become disjoint over $D_{Mij}^0$ (see the proof of Prop.  \ref{mvcvcvcnb}). 
\end{remarks}

\begin{proposition}\label{mvcvcvcnb}
Given a compatible sequence of sections $Z_M,\ldots, Z_N$ starting at $M$, let $p':\,Y'\to X$ be an elementary transformation given by the data $(D_M,Z_M)$. Let $Z_{M+1}',\ldots,Z_N'\subset Y'$ be the proper transforms of $Z_{M+1},\ldots,Z_N$. Then $Z_{M+1}',\ldots,Z_N'$ are sections of $p'$ which form a compatible sequence of sections starting at $M+1$. 

In~particular, given a 
compatible sequence of sections $Z_1,\ldots, Z_N$ starting at $1$, we can iterate elementary transformations (along the data $(D_i, Z_i)$), to get a sequence of $\bP^1$-bundles
$Y_0=Y$, $Y_1=Y'$, $\ldots$, $Y_N$ over $X$.
\end{proposition}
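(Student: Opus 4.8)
The plan is to reduce everything to the analysis of a single elementary transformation along $(D_1,Z_1)$ and to verify that each clause of Definition~\ref{compatible} is inherited by the proper transforms; iterating the resulting inductive step then produces the tower $Y_0,\ldots,Y_N$. Two structural facts will drive the whole argument. First, the modification $Y'\dashrightarrow Y$ is an isomorphism over $X\setminus D_1$, so every condition is automatically preserved over the strata that miss $D_1$, and the real content is concentrated over the loci contained in $D_1$, chiefly the triple intersections $D_{1ij}$. Second, I use the functoriality of elementary transformations recalled before the statement: restricting the transformation over an inclusion $D_i\hookrightarrow X$ realizes it as the elementary transformation of the $\bP^1$-bundle $p^{-1}(D_i)\to D_i$ along the data $(D_{1i},Z_1|_{D_{1i}})$.

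First I would show that each $Z_i'$ with $i\ge 2$ is a section of $p'$. Working over $D_i$ as above, the first part of Lemma~\ref{aux} applies as soon as $Z_i$ and $Z_1$ either agree or are disjoint over all of $D_{1i}$, and this is exactly what Definition~\ref{compatible}(1) gives for the pair $(1,i)$: in case (a) they agree over $D_{1i}$, while in case (b) they are disjoint over $D_{1i}^0$ and the associated set $\cK$ would force an index $k<1$, hence $\cK=\varnothing$, so they are disjoint over every deeper stratum $D_{1ik}$ as well, i.e.\ over all of $D_{1i}$.

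Next I would isolate the guiding principle that \emph{agreement of two sections over a stratum is preserved by the transformation}: if $Z_a=Z_b$ over $D_{ab}$ then, having identical proper transforms, $Z_a'=Z_b'$ over $D_{ab}$. This immediately yields Definition~\ref{compatible}(2) for the new system, since for any triple $(i,j,k)$ with $2\le i,j,k$ the old condition (2) supplies a pair agreeing over its double locus, and agreement persists; it also settles case (a) of condition (1). The substance is case (b): for a pair $(i,j)$ with $2\le i<j$ that is disjoint over $D_{ij}^0$, I must show $Z_i',Z_j'$ stay disjoint over $D_{ij}^0$ (clear, away from $D_1$) and, crucially, become disjoint over the new stratum $D_{1ij}$, so that the index $1$ does not enter the new set $\cK'$. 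Here condition (2) for the old triple $(1,i,j)$ forces the center $Z_1$ to agree with one of $Z_i,Z_j$ over $D_{1ij}$: if $Z_i,Z_j$ are themselves disjoint over $D_{1ij}$ this is precisely the hypothesis of Lemma~\ref{aux}(a), whereas if $Z_i,Z_j$ agree over $D_{1ij}$ (so $1\in\cK$ for the old system) the tangent-space equality \eqref{svsdasdvsdv} is exactly the hypothesis of Lemma~\ref{aux}(b); in either case $Z_i',Z_j'$ become disjoint over $D_{1ij}$. Since the transformation can only separate sections and never create new coincidences, I expect $\cK'=\cK\setminus\{1\}$, and the remaining clauses of (1)(b)—the inequalities $2\le k<i$, the agreements of $Z_k'$ with $Z_i'$ and $Z_j'$, and the tangent condition at points of $s_k'(D_{ijk}^0)$, a locus disjoint from $D_1$—are inherited from the old system via the preservation-of-agreement principle.

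The main obstacle I anticipate is the bookkeeping of case (b) over $D_{1ij}$: correctly matching the two separation mechanisms of Lemma~\ref{aux} to the two sub-cases according to whether $Z_i,Z_j$ coincide there, and checking that the tangent condition \eqref{svsdasdvsdv} is both available when it is needed (from $1\in\cK$) and reproduced for the new system. This is where the precise formulation of a compatible system earns its keep, and where I would also invoke the implicit regularity that over each irreducible stratum two sections either coincide identically or are disjoint, so that the dichotomy underlying $\cK$ is a genuine dichotomy. Once this inductive step is in place, iterating the elementary transformation along $(D_M,Z_M)$ for $M=1,2,\ldots$ yields the asserted sequence of $\bP^1$-bundles $Y_0=Y,\,Y_1=Y',\,\ldots,\,Y_N$ over $X$.
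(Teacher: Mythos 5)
Your overall architecture coincides with the paper's: first show each $Z_i'$ is a section by checking that $Z_1$ and $Z_i$ agree or are disjoint over $D_{1i}$, note that agreement and the sets $\cK\setminus\{1\}$ persist because the modification is an isomorphism off $D_1$, and then rule out $1\in\cK'$ by combining condition (2) for the triple $(1,i,j)$ with Lemma \ref{aux}(a)/(b). However, there is a genuine gap, and you have put your finger on it yourself: at two crucial points you invoke an ``implicit regularity that over each irreducible stratum two sections either coincide identically or are disjoint.'' No such regularity is part of Definition \ref{compatible}, and it is false for arbitrary sections of a $\bP^1$-bundle (two sections can meet along a proper closed subset of the base). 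Concretely, in your first step the fact that $\cK=\varnothing$ for the pair $(1,i)$ only says that $Z_1$ and $Z_i$ do not agree \emph{identically} over any $D_{1ik}$; it does not make them disjoint over those strata, so Lemma \ref{aux} cannot yet be applied. Likewise your case analysis over $D_{1ij}$ (``$Z_i,Z_j$ disjoint'' versus ``$Z_i,Z_j$ agree'') omits the a priori possible case of a partial intersection.

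The paper closes exactly this hole with a codimension count, which is the one real geometric input of the proof and is absent from your proposal. If $Z_1$ and $Z_i$ neither agree nor are disjoint over $D_{1i}$, their agreement locus $G\subset D_{1i}$ lies in $\bigcup_k D_{1ik}$ (by disjointness over $D^0_{1i}$) and contains no $D_{1ik}$ (that would require an index $k<1$ in $\cK$), so every component of $G$ has codimension $\ge 4$ in $X$; but $Z_1$ and $Z_i$ are distinct sections of the $\bP^1$-bundle $p^{-1}(D_{1i})\to D_{1i}$, so their intersection, if nonempty, has pure codimension $4$ in $Y$, i.e.\ $G$ has pure codimension $3$ in $X$ --- a contradiction. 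This establishes the agree-or-disjoint dichotomy for every pair involving the center index $1$, which is all the proof needs: in the case $1\notin\cK$ one then argues via condition (2) that (say) $Z_1$ agrees with $Z_i$ over $D_{1i}$, that $Z_1$ cannot agree with $Z_j$ over $D_{1j}$ (else $Z_i,Z_j$ would agree over $D_{1ij}$, contradicting $1\notin\cK$), hence by the purity argument $Z_1$ and $Z_j$ are \emph{disjoint} over $D_{1j}$, and Lemma \ref{aux}(a) applies to give disjointness of $Z_i',Z_j'$ over $D_{1ij}$. Supplying this purity argument would complete your proof; without it, the dichotomy you rely on is unjustified.
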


\begin{proof}[Proof of Prop. \ref{mvcvcvcnb}]
We first show that each $Z_i'$ is a section for each $i>M$.
By Lemma \ref{aux}, it suffices to show that $Z_M$ and $Z_i$ are either disjoint or agree over $D_{Mi}$. Suppose they do not agree over $D_{Mi}$ and are not disjoint. Then we are in situation (b) of condition (1) in Def.  \ref{compatible}. Since there are no indices $k$ such that $M\leq k< M$, it follows that the locus where $Z_i$ and $Z_M$ agree is empty; hence, we have a contradiction. 


Next we show that $Z_{M+1}',\ldots, Z_N'$ form a compatible sequence of sections starting at $M+1$. Notice that condition (2) is obvious because the elementary transformation is an isomorphism outside of $D_M$ (if $Z_a$ and $Z_b$ agree over $D_{ab}$, then $Z'_a$ and $Z'_b$ agree over $D_{ab}$ as well). So we only need to check condition (1). Take $M<i<j$ such that $D_{ij}\ne\emptyset$. As before, if $Z_i$ and $Z_j$ agree over $D_{ij}$, then $Z_i'$ and $Z_j'$ agree over $D_{ij}$ as well. If $Z_i$ and $Z_j$  do not agree, then let 
$$\cK:=\{k\in\{1,\ldots, N\}\,|\,\hbox{\rm $Z_i$ and $Z_j$ agree over $D_{ijk}$}\},$$
$$\cK':=\{k\in\{1,\ldots, N\}\,|\,\hbox{\rm $Z_i'$ and $Z_j'$ agree over $D_{ijk}$}\}.$$

It is clear that $\cK'\setminus\{M\}=\cK\setminus\{M\}$ and \eqref{svsdasdvsdv} is satisfied
for these indices $k$ (because the elementary transformation is an isomorphism over $D^0_{ijk}$). So we only need to check that $M\not\in \cK'$, i.e., that $Z'_i$ and $Z'_j$ do not agree over $D_{Mij}$. We can assume that $D_{Mij}\ne\emptyset$, as otherwise there is nothing to prove. Consider two cases. Firstly, suppose $M\not\in \cK$. By condition (2) of Def. \ref{compatible}, we may assume without loss of generality that $Z_M$ and $Z_i$ agree over $D_{Mi}$. Then $Z_M$ and $Z_j$ do not agree over $D_{Mj}$ and therefore, must be disjoint as proved above. It follows by Lemma \ref{aux}(a) (applied to $Z_i$ and $Z_j$ over $D_{ij}$) that $Z'_i$ and $Z'_j$ are disjoint over $D_{Mij}$. Secondly, suppose $M\in \cK$. Then by Lemma \ref{aux}(b) applied to $Z_i$ and $Z_j$ over $D_{ij}$, we have that $Z'_i$, $Z'_j$ are disjoint over $D_{Mij}^0$ and hence, $M\not\in \cK'$.
\end{proof}

Before we give the proof of Theorem \ref{asdazxvsfvsfvsdaqwf}, we recall some basic properties of birational contractions. Recall that a birational map $f: ~Y\dra~ X$ between smooth, projective varieties is called a \emph{birational contraction} if the inverse map $f^{-1}$ does not contract any divisor. Equivalently, given a common resolution $(p,q): W\ra Y\times X$, any $p$-exceptional divisor is $q$-exceptional \cite{HK}[Def. 1.0].  For such a $W$, we have $\rho(W)=\rho(X)+r$, where $r$ is the number of $p$-exceptional divisors. Note that if $f$ does not contract a divisor $D$ in $Y$, then $f$ is a local isomorphism at the generic point of $D$. Hence,  a birational contraction $f: Y\dra X$ is a small modification if and only if $f$ does not contract any divisor, or, equivalently, $\rho(X)=\rho(Y)$. 

\begin{lemma}\label{preimage}
Let $f: Y\ra X$ be a proper birational morphism of smooth varieties. Assume that $T\subset X$ is a smooth, irreducible closed subvariety with smooth, irreducible scheme-theoretic preimage $Z\subset Y$.  Consider the blow-ups 
$$\pi_1: \tilde{X}=\Bl_T(X)\ra X,\quad \pi_2: \tilde{Y}=\Bl_Z(Y)\ra Y$$ with exceptional divisors $E_T$ and $E_Z$. Then there is an induced birational proper morphism 
$\tilde{f}:\tilde{Y}\ra\tilde{X}$, such that $\tilde{f}(E_Z)=E_T$. 
\end{lemma}

\begin{proof}
By the universal property of blow-ups, there is a morphism $\tilde{f}$ such that $\pi_1\circ\tilde{f}=f\circ\pi_2$ and we have $\tilde{f}^{-1}(E_T)=E_Z$. It follows that $\tilde{f}$ is proper
and $\tilde{f}(E_Z)=E_T$. 
\end{proof}

\begin{lemma}\label{contract}
Assume that $f: Y\dra X$ is a birational map between normal, projective varieties and $\pi:\tilde{Y}\ra Y$ is the blow-up of a closed subvariety $Z\subseteq Y$, with exceptional divisor $E$. Assume that $$f\circ\pi:\tilde{Y}\dra X$$ contracts all the components of $E$. Then if $f\circ\pi$ is a birational contraction, then $f$ is a birational contraction. 
\end{lemma}

\begin{proof}
If $(p,q): W\ra\tilde{Y}\times X$ is a common resolution and any $p$-exceptional divisor is $q$-exceptional, then, clearly any $\pi\circ p$-exceptional divisor (i.e., $p$-exceptional or a proper transforms of a component of $E$) is $q$-exceptional. 
\end{proof}

\begin{proof}[Proof of Thm. \ref{asdazxvsfvsfvsdaqwf}.]
Choose general points $q_1,\ldots,q_n\in\bP^{n-2}$ and let 
$\pi:\,\Bl_{q_n}\bP^{n-2}\to\bP^{n-3}$
be a resolution of the linear projection away from $q_n$. Then $\pi$ is a $\bP^1$-bundle.
Let $p_i=\pi(q_i)$ for $i=1,\ldots,n-1$.  For any subset $I$ of $\{1,\ldots,n-1\}$ such that $1\le |I|\le n-4$, let $L_I\subset\bP^{n-3}$ be the linear subspace spanned by $p_i$ for $i\in I$. Notice that we have sections  $t_I:\,L_I\to \pi^{-1}(L_I)$ that send $L_I$ to the proper transform of the linear subspace in $\bP^{n-2}$ spanned by $q_i$, for $i\in I$. 
Let $\Psi:\,\oM_{0,n}\to\bP^{n-3}$ be the Kapranov map such that $\Psi(\delta_{I\cup\{n\}})=L_I$  for any subset $I$ as above \cite{Kapranov}. Let $\pi_0:\,Y\to \oM_{0,n}$ be the pull-back of $\pi$ and let $s_I:\,\delta_{I\cup\{n\}}\to \pi^{-1}(\delta_{I\cup\{n\}})$
be the pull-back of $t_I$ for each subset $I$ as above.
We order the boundary divisors $\delta_{I\cup\{n\}}$ according to $|I|$ (in increasing order)
and arbitrarily for fixed $|I|$. This gives an order - which we denote by $\prec$ - on the subsets $I$.

\begin{claim}\label{check compatible}
The sections $s_I$ form a compatible sequence of sections.
\end{claim}

Assuming Claim \ref{check compatible}, we prove that by Prop.~\ref{mvcvcvcnb}, the last elementary transformation $Y_N$  is a SQM of the blow-up of $\bP^{n-2}$ along the points $q_1,\ldots, q_n$ and the proper transforms of the linear subspaces spanned by $\{q_i\}_{i\in I}$ for all subsets $I\subset\{1,\ldots, n-1\}$ with $\leq n-4$ elements. Moreover, we prove that the required small modification $\tLM_{n+1}$ is the blow-up of $Y_N$ in the proper transforms of the linear subspaces spanned by $\{q_i\}_{i\in I}$ for all subsets $I$ with $n-3$ elements. 

Consider the successive blow-ups $$X_0=\Bl_{q_n}\bP^{n-2}, X_1,\ldots, X_N$$ of $X_0$ along the (proper transforms of the) linear subspaces $t_I(L_I)$ in $\bP^{n-2}$ spanned by $q_i$, for $i\in I$, with the subsets $I$ ordered as above ($|I|\leq n-4$). For each  $\bP^1$-bundle in the sequence $$Y_0=Y, Y_1,\ldots, Y_N,$$ consider the induced birational map 
$f_k: Y_k\dra X_k$. For example, $f_0: Y_0\ra X_0$ is the birational proper map $Y\ra\Bl_{q_n}\bP^{n-2}$. 
\begin{claim}\label{contractions}
The map $f_k: Y_k\dra X_k$ is a birational contraction for all $k$.  
\end{claim}

\begin{proof}
We do an induction on $k$. Clearly, the statement holds for $k=0$ as $f_0$ is a birational morphism between smooth projective varieties.

For each $I\subset\{1,\ldots, n-1\}$ ($|I|\leq n-4$), we let $U_I\subseteq\bP^{n-3}$ be the complement of all the subspaces $L_{I'}$ for all subsets $I'\prec I$ ($I'\neq I$). The order $\prec$ is such that $L_{I'}\subseteq L_I$ only if $I'\prec I$ (since  $L_{I'}\subseteq L_I$ if and only if $I'\subseteq I$). In particular, $L_I\cap U_I\neq\emptyset$ and $U_I\subseteq U_{I'}$ if $I'\prec I$.

We introduce some notation: for an open set $U\subseteq\bP^{n-3}$ and a map $f: W\ra\bP^{n-3}$ we denote $W_U=f^{-1}(U)$. We will use this for the 
$\bP^1$-bundles $\pi_i:~Y_i\ra\oM_{0,n}$ (via the Kapranov map $\Psi: \oM_{0,n}\ra\bP^{n-3}$) and the blow-ups $X_i$ of $X_0$ (via $\pi: X_0\ra\bP^{n-3}$).

Assume now that $k\geq1$ and $Y_k$ is the elementary transformation of $Y_{k-1}$ along $(\delta_{I\cup\{n\}}, s_I)$, for a fixed subset $I$ with $|I|\leq n-4$. If $$\tilde{Y}_k\ra Y_{k-1}$$ is the blow-up along the proper transform of  $s_I(\delta_{I\cup\{n\}})$, then $Y_k$ is the blow-down of $\tilde{Y}_k$ along the proper transform of $\pi_i^{-1}(\delta_{I\cup\{n\}})$. Recall that 
$$X_k\ra X_{k-1}$$ is the blow-up along the proper transform of $t_I(L_I)$.  By induction, the map $f_{k-1}$ is a birational contraction. To prove that $f_k$ is a birational contraction, using Lemma \ref{contract}, it is enough to prove that:
\begin{itemize}
\item[(1) ] $\tilde{Y}_k\dra X_k$ is a birational contraction;
\item[(2) ] $\pi_i^{-1}(\delta_{I\cup\{n\}})$ is contracted by $f_{k-1}$. 
\end{itemize}

Clearly, it is enough to check (1) and (2) over open sets that intersect the above divisors.  Note that for $I'\prec I$, the elementary transformation with center  $(\delta_{I'\cup\{n\}}, s_{I'})$ is an isomorphism away from $\delta_{I'\cup\{n\}}=\Psi^{-1}(L_{I'})$. Hence, for $0\leq i\leq k-1$, the bundles $Y_i$ are isomorphic over $U_I$, i.e.,  $(Y_i)_{U_I}\cong Y_{U_I}$. Similarly, the blow-ups $X_0, X_1,\ldots, X_{k-1}$ are also isomorphic  over $U_I$, since at each step we blow-up a subvariety whose image under $\pi$ lies in $L_{I'}$, for some $I'\prec I$. In particular, the induced birational morphism $$(f_{k-1})_{U_I}: (Y_{k-1})_{U_I}\ra (X_{k-1})_{U_I}$$ is proper (being the same as the map $Y_0\ra X_0$ over $U_I$). Moreover, as the section $s_I$ is (by definition) the pull-back of the section $t_I$, the same is true when we consider these sections restricted to $U_I$. If we let 
$$(t_I)_{U_I}:=t_I(L_I)\cap\pi^{-1}(U_I),\quad (Z_I)_{U_I}:=s_I(\Psi^{-1}(U_I)\cap\delta_{I\cup\{n\}}),$$
then the pull-back under $(f_{k-1})_{U_I}$ of $(t_I)_{U_I}$ is $(Z_I)_{U_I}$. Moreover, $(X_k)_{U_I}$ is the blow-up of $(X_{k-1})_{U_I}$ along $(t_I)_{U_I}$ and $(Y_k)_{U_I}$ is the elementary transformation of $(Y_{k-1})_{U_I}$ along $(Z_I)_{U_I}$: $(\tilde{Y}_k)_{U_I}$ is the blow-up of $(Y_{k-1})_{U_I}$ along $(Z_I)_{U_I}$, and $(Y_k)_{U_I}$ is the blow-down of $(\tilde{Y}_k)_{U_I}$ along the proper transform of $\pi_{k-1}^{-1}(\delta_{I\cup\{n\}}\cap\Psi^{-1}(U_I))$. We now check (1) and (2) over $U_I$ (which intersects $L_I$, over which all the blown-up or blown-down loci lie). Property (2) follows immediately, as
$$\pi_{k-1}^{-1}(\delta_{I\cup\{n\}}\cap\Psi^{-1}(U_I))=\pi_0^{-1}(\delta_{I\cup\{n\}}\cap\Psi^{-1}(U_I))$$
is mapped by $f_0$ (hence, $f_{k-1}$) to $\pi^{-1}(L_I\cap U_I)$. We apply Lemma \ref{preimage} to the morphism $(f_{k-1})_{U_I}: (Y_{k-1})_{U_I}\ra (X_{k-1})_{U_I}$ and closed subschemes $(t_I)_{U_I}$, $(Z_I)_{U_I}$ (both sections of $\bP^1$-bundles over a smooth base, with $(Z_I)_{U_I}$ the scheme theoretic preimage of $(t_I)_{U_I}$). It follows by Lemma \ref{preimage} that the birational map $(\tilde{Y}_k)_{U_I}\dra X_k$ is a birational contraction, as it is a local isomorphism at the generic points of the corresponding exceptional divisors. Hence, property (1) holds.
\end{proof}

As after each elementary transformation, the Picard number $\rho(Y_i)$ stays constant, while $\rho(X_i)$ increases by one after each blow-up, it follows that $\rho(Y_N)=\rho(Y)=\rho(\oM_{0,n})+1$ equals $\rho(X_N)$. Hence, using Claim \ref{contractions}, it follows that the induced birational map $f_N: Y_N\dra X_N$ is a small modification. As in the proof of Claim \ref{contractions}, for all $I\subset\{1,\ldots, n-2\}$  such that $|I|=n-3$, the proper transform in $X_N$ of the subspace spanned by $\{q_i\}_{i\in I}$ does not lie in the indeterminacy locus of $f_N$. Moreover,  blowing up successively these loci and their proper transforms in $Y_N$ leads to a sequence of small modifications $f_{N+1}, f_{N+2},\ldots$, the last of which gives the required small modification $\tLM_{n+1}$.

\begin{proof}[Proof of Claim \ref{check compatible}]
Set $D_I:=\delta_{I\cup\{n\}}$.
Suppose $I\ne J$, $|I|\le |J|$, $D_{IJ}\ne\emptyset$. Then either $I\subset J$, in which case $Z_I$ and $Z_J$ agree over $D_{IJ}$, or there exists a partition $A\sqcup B\sqcup C=\{1,\ldots,n-1\}$ such that $I=A\cup B$ and $J=A\cup C$. In this case, the set $\cK$ from condition (1) of the compatible sequence is the set of all non-empty subsets of $A$.
This shows condition (2) and all of condition (1), except \eqref{svsdasdvsdv}.  If $A=\emptyset$ then there is nothing to check. Assume $A\neq\emptyset$. Let $\alpha\in D_{KIJ}^0$. It is enough to find a curve $C$ in $D_{IJ}$ passing through $\alpha$, such that in the ruled surface $S:=p^{-1}(C)$, $s_I$ and $s_J$ are not tangent above $\alpha$. As we have
$$\Psi(D_{IJ})=L_I\cap L_J\cong\bP^{|A|}, \quad \Psi(D_{IJK})=L_K\subseteq
 L_A\cong\bP^{|A|-1},$$
we may choose $l$ to be any line in $L_I\cap L_J$ that passes through $\Psi(\alpha)$ and is not contained in $L_A$. Let $C$ be any curve in $D_{IJ}$ that maps to $l$ and is smooth at $\alpha$. We claim that $C$ has the desired property, i.e., that $s_I(C)$ and  $s_J(C)$ are not tangent above $\alpha$. It suffices to check this after composing with the map $\Psi':\,Y\to \Bl_{q_n}\bP^{n-2}$, the pull-back of the Kapranov map, and the blow-up map $\Bl_{q_n}\bP^{n-2}\to \bP^{n-2}$. Let $\Lambda$ be the plane in $\bP^{n-2}$ which is the image of $p^{-1}(l)$. If $Z_I$ is the linear subspace in $\bP^{n-2}$ spanned by the points $q_i$ for $i\in I$, then $Z_I\cap Z_J=Z_A$. Clearly, the linear subspaces $Z_I\cap\Lambda$ and $Z_J\cap\Lambda$ intersect only at a point (lying above $L_A\cap l=\Psi(\alpha)$). Equivalently, $Z_I\cap\Lambda$ and $Z_J\cap\Lambda$ are not tangent at their intersection point. This 
proves the claim. 
\end{proof}

\end{proof}



The proof of Thm. \ref{asdazxvsfvsfvsdaqwf} and Cor. \ref{main} yield the following:
\begin{corollary}\label{codim 3}
Let $p_1,\ldots, p_{n-2}\in\bP^{n-3}$ be points in linearly general position and let $X_n$ be the toric variety which is the blow-up of $\bP^{n-3}$ along the proper transforms of linear subspaces of codimension $\geq 3$ spanned by the points $p_i$, in order of increasing dimension. Let $e$ denote the identity of the open torus of $X_n$. Then $\Bl_eX_{n+1}$ is a SQM of a $\bP^1$-bundle over $\oM_{0,n}$. If  $\ch k=0$ and $n\geq 134$, then $\Bl_eX_{n+1}$ is not a MDS. 
\end{corollary}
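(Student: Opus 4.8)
The plan is to recognize that Corollary \ref{codim 3} is essentially a repackaging of the machinery already developed in the proof of Theorem \ref{asdazxvsfvsfvsdaqwf}, now applied to the intermediate toric variety $X_n$ rather than to the full Kapranov blow-up $\oM_{0,n}$. The key observation from Remark (1) is that $X_n$ sits between $\bP^{n-3}$ and $\oLM_n$: it is obtained by blowing up only the proper transforms of linear subspaces $L_I$ spanned by the $p_i$ of codimension $\geq 3$ (equivalently $|I|\leq n-5$), in order of increasing dimension. So the first step is to run the identical construction as in the proof of Theorem \ref{asdazxvsfvsfvsdaqwf}: choose general points $q_1,\ldots,q_{n-1}\in\bP^{n-2}$ projecting to the $p_i$, form the resolved projection $\pi\colon\Bl_{q_n}\bP^{n-2}\to\bP^{n-3}$, and pull it back along the Kapranov-type map to obtain a $\bP^1$-bundle over $X_n$.

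Next I would transport the compatible system of sections. In the $\oM_{0,n}$ proof one uses sections $s_I$ indexed by all $I$ with $1\leq|I|\leq n-4$; here, because $X_{n+1}$ only records the codimension $\geq 3$ blow-ups, I restrict attention to the subsets $I$ of the appropriate size range (those corresponding to the subspaces actually blown up in forming $X_{n+1}$). The verification that these form a compatible system of sections in the sense of Definition \ref{compatible} is word-for-word the same as in the proof of Theorem \ref{asdazxvsfvsfvsdaqwf}: the tangency condition \eqref{svsdasdvsdv} is checked by the same line-and-plane argument using $Z_I\cap\Lambda$ and $Z_J\cap\Lambda$ meeting at a single point. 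Applying Proposition \ref{mvcvcvcnb}, the iterated elementary transformations yield a tower of $\bP^1$-bundles whose final term $Y_N$ is an SQM of $\Bl_eX_{n+1}$, exactly as $Y_N$ was an SQM of $\Bl_e\oLM_{n+1}$ before. The upshot is that $\Bl_eX_{n+1}$ is an SQM of a $\bP^1$-bundle over $X_n$, and since the generic section structure is unaffected, it is an SQM of a $\bP^1$-bundle over $\oM_{0,n}$ (here one uses that $X_n$ and $\oM_{0,n}$ differ only by blow-ups of the top-dimensional strata, which do not interfere with the $\bP^1$-bundle over the relevant locus).

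For the final non-MDS assertion I would invoke the SQM-invariance of the MDS property recorded in \ref{MDS obs}: an SQM of a variety is a MDS if and only if the original is. A $\bP^1$-bundle $Y\to\oM_{0,n}$ admits a surjective morphism to $\oM_{0,n}$, so by the Okawa/Hu--Keel result in \ref{MDS obs}, if $Y$ were a MDS then $\oM_{0,n}$ would be one. Combined with Corollary \ref{main}, which asserts $\oM_{0,n}$ is not a MDS for $n\geq 134$ in characteristic $0$, we conclude that the $\bP^1$-bundle is not a MDS, and hence neither is its SQM $\Bl_eX_{n+1}$.

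The main obstacle I anticipate is the bookkeeping that $X_n$ rather than $\oM_{0,n}$ or $\oLM_n$ is the correct base, and in particular verifying that the compatible system of sections survives when one omits the high-dimensional (codimension $\leq 2$) strata from the blow-up sequence. One must check that dropping those sections does not break conditions (1) and (2) of Definition \ref{compatible} for the remaining indices — that is, that the sets $\cK$ and the disjointness/agreement dichotomy are inherited correctly under restriction of the index set. This is a purely combinatorial compatibility check, but it is the one place where the argument is genuinely different from the proof of Theorem \ref{asdazxvsfvsfvsdaqwf}; everything else is a direct citation of the results already established.
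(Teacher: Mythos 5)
There is a genuine gap in the middle of your proposal: you run the elementary-transformation machinery over the wrong base, and the bridge you propose back to $\oM_{0,n}$ does not exist. First, your ``restriction'' of the index set is vacuous: the subsets $I\subset\{1,\ldots,n-1\}$ with $1\le|I|\le n-4$ used in the proof of Theorem \ref{asdazxvsfvsfvsdaqwf} are \emph{already} exactly the codimension $\ge 3$ spans in $\bP^{n-2}$, i.e.\ exactly the centers blown up in forming $X_{n+1}$. You have the roles reversed: in the paper's proof, $Y_N$ is an SQM of $\Bl_eX_{n+1}$ (not of $\Bl_e\oLM_{n+1}$, as you write); it is the \emph{further} blow-up $\tLM_{n+1}$ of $Y_N$ along the codimension $2$ centers that is the SQM of $\Bl_e\oLM_{n+1}$. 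Second, the construction cannot be carried out over $X_n$ at all: the sections $s_I$ live over the boundary divisors $\delta_{I\cup\{n\}}\subset\oM_{0,n}$, and for most relevant $I$ (for instance every $I$ containing the index $n-1$, and every $I$ with $|I|=n-4$) the corresponding locus in $X_n$ has codimension $\ge 2$, since those centers are never blown up in $X_n$; so there are no divisors to support a compatible system and Proposition \ref{mvcvcvcnb} does not apply. Third, your final bridging claim --- that a $\bP^1$-bundle over $X_n$ is an SQM of a $\bP^1$-bundle over $\oM_{0,n}$ because the bases ``differ only by blow-ups of top-dimensional strata'' --- is false: the birational morphism from $\oM_{0,n}$ to $X_n$ contracts many divisors, and small $\bQ$-factorial modifications preserve the Picard number, whereas the Picard number $\rho(X_n)+1$ of any $\bP^1$-bundle over $X_n$ is strictly smaller than $\rho(\Bl_eX_{n+1})=\rho(X_{n+1})+1=\rho(\oM_{0,n})+1$. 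For the same reason your intermediate assertion that a tower over $X_n$ ends in an SQM of $\Bl_eX_{n+1}$ is impossible.

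The correct argument --- which is what the paper means by ``the proof of Thm.\ \ref{asdazxvsfvsfvsdaqwf} and Cor.\ \ref{main} yield the following'' --- requires no new construction and no modification of the compatible system. Keep the base equal to $\oM_{0,n}$ and quote the proof of Theorem \ref{asdazxvsfvsfvsdaqwf} verbatim up to, but not including, its last sentence: the final term $Y_N$ of the tower is a $\bP^1$-bundle over $\oM_{0,n}$ and is an SQM of the blow-up of $\bP^{n-2}$ at $q_1,\ldots,q_n$ and at the proper transforms of the spans of $\{q_i\}_{i\in I}$ with $|I|\le n-4$. Now identify $q_1,\ldots,q_{n-1}$ with the torus-fixed points of $\bP^{n-2}$ and $q_n$ with the identity $e$ of the torus; since $e$ lies on none of the other centers, the order in which $e$ is blown up is immaterial, and this blow-up is precisely $\Bl_eX_{n+1}$. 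That gives the first assertion of Corollary \ref{codim 3}. Your final paragraph --- SQM-invariance of the MDS property from \ref{MDS obs}, Okawa's theorem applied to the surjection $Y_N\to\oM_{0,n}$, and Corollary \ref{main} --- is correct and completes the proof exactly as intended.
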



\section*{References}

\begin{biblist}


\bib{Artin}{article}{
    AUTHOR = {Artin, Michael},
     TITLE = {Some numerical criteria for contractability of curves on
              algebraic surfaces},
   JOURNAL = {Amer. J. Math.},
  FJOURNAL = {American Journal of Mathematics},
    VOLUME = {84},
      YEAR = {1962},
     PAGES = {485--496},
}

\bib{AGS}{article}{
   author={Alexeev, Valery},
   author={Gibney, Angela}, 
   author={Swinarski, David},  
   title={Conformal blocks divisors on $\bar{M}_{0,n}$ from $sl_2$},
   eprint={arXiv:1011.6659v1},
   date={2010},
}

\bib{BCHM}{article}{
AUTHOR = {Birkar, Caucher},
AUTHOR = {Cascini, Paolo},
AUTHOR = {Hacon, Christopher D.},
AUTHOR = {M\textsuperscript{c}Kernan, James},
     TITLE = {Existence of minimal models for varieties of log general type},
   JOURNAL = {J. Amer. Math. Soc.},
  FJOURNAL = {Journal of the American Mathematical Society},
    VOLUME = {23},
      YEAR = {2010},
    NUMBER = {2},
     PAGES = {405--468},
       URL = {http://dx.doi.org/10.1090/S0894-0347-09-00649-3},
}

\bib{BGM}{article}{
  AUTHOR = {Belkale, Prakash},
  AUTHOR = {Gibney, Angela},
  AUTHOR = {Mukhopadhyay, Swarnava},
   title={Quantum cohomology and conformal blocks on $\oM_{0,n}$},
   eprint={arXiv:1308.4906},
   date={2013},
}

\bib{Hausen}{article}{
  AUTHOR = {B\"aker, Hendrik},
  AUTHOR = {Hausen, Juergen},
  AUTHOR = {Keicher, Simon},
  title={On Chow quotients of torus actions},
   eprint={arXiv:1203.3759},
   date={2012},
}

\bib{BP}{incollection}{
AUTHOR = {Batyrev, Victor V.}
AUTHOR= {Popov, Oleg N.},
     TITLE = {The {C}ox ring of a del {P}ezzo surface},
 BOOKTITLE = {Arithmetic of higher-dimensional algebraic varieties ({P}alo
              {A}lto, {CA}, 2002)},
    SERIES = {Progr. Math.},
    VOLUME = {226},
     PAGES = {85--103},
 PUBLISHER = {Birkh\"auser Boston},
   ADDRESS = {Boston, MA},
      YEAR = {2004},
}

\bib{C}{article}{
AUTHOR = {Castravet, Ana-Maria},
     TITLE = {The {C}ox ring of {$\overline M_{0,6}$}},
   JOURNAL = {Trans. Amer. Math. Soc.},
  FJOURNAL = {Transactions of the American Mathematical Society},
    VOLUME = {361},
      YEAR = {2009},
    NUMBER = {7},
     PAGES = {3851--3878},
       URL = {http://dx.doi.org/10.1090/S0002-9947-09-04641-8},
}


\bib{CC}{article}{
   author={Coskun, Izzet},
   author={Chen, Dawei},  
   title={Extremal effective divisors on the moduli space of n-pointed genus one curves},  
   eprint={arXiv:1304.0350},
   date={2013},
}


\bib{CLS}{book}{
AUTHOR = {Cox, David A.}, 
AUTHOR= {Little, John B.},
AUTHOR={Schenck, Henry K.},
     TITLE = {Toric varieties},
    SERIES = {Graduate Studies in Mathematics},
    VOLUME = {124},
 PUBLISHER = {American Mathematical Society},
   ADDRESS = {Providence, RI},
      YEAR = {2011},
}

%
%

\bib{CT1}{article}{
AUTHOR = {Castravet, Ana-Maria}
AUTHOR= {Tevelev, Jenia},
     TITLE = {Hypertrees, projections, and moduli of stable rational curves},
   JOURNAL = {J. Reine Angew. Math.},
  FJOURNAL = {Journal f\"ur die Reine und Angewandte Mathematik. [Crelle's
              Journal]},
    VOLUME = {675},
      YEAR = {2013},
     PAGES = {121--180},
}

\bib{CT2}{article}{
AUTHOR = {Castravet, Ana-Maria}
AUTHOR= {Tevelev, Jenia},
 TITLE = {Rigid curves on {$\overline M_{0,n}$} and arithmetic
              breaks},
 BOOKTITLE = {Compact moduli spaces and vector bundles},
    SERIES = {Contemp. Math.},
    VOLUME = {564},
     PAGES = {19--67},
 PUBLISHER = {Amer. Math. Soc.},
   ADDRESS = {Providence, RI},
      YEAR = {2012},
}

\bib{Cutkosky}{article}{
AUTHOR = {Cutkosky, Steven Dale},
     TITLE = {Symbolic algebras of monomial primes},
   JOURNAL = {J. Reine Angew. Math.},
  FJOURNAL = {Journal f\"ur die Reine und Angewandte Mathematik},
    VOLUME = {416},
      YEAR = {1991},
     PAGES = {71--89},
      ISSN = {0075-4102},
}

\bib{Eisenbud}{book}{
AUTHOR = {Eisenbud, David},
     TITLE = {Commutative algebra},
    SERIES = {Graduate Texts in Mathematics},
    VOLUME = {150},
      NOTE = {With a view toward algebraic geometry},
 PUBLISHER = {Springer-Verlag},
   ADDRESS = {New York},
      YEAR = {1995},
     PAGES = {xvi+785},
}

\bib{Fed}{article}{
  AUTHOR = {Fedorchuk, Maksym},
  title={Cyclic covering morphisms on $\oM_{0,n}$},
   eprint={arXiv:1105.0655},
   date={2011},
}

\bib{Fulton}{book}{
    AUTHOR = {Fulton, William},
     TITLE = {Algebraic curves},
    SERIES = {Advanced Book Classics},
 PUBLISHER = {Addison-Wesley Publishing Company Advanced Book Program},
   ADDRESS = {Redwood City, CA},
      YEAR = {1989},
}

\bib{GianGib}{article}{
  AUTHOR = {Giansiracusa, Noah},
 AUTHOR = {Gibney, Angela},
     TITLE = {The cone of type {$A$}, level 1, conformal blocks divisors},
   JOURNAL = {Adv. Math.},
  FJOURNAL = {Advances in Mathematics},
    VOLUME = {231},
      YEAR = {2012},
    NUMBER = {2},
     PAGES = {798--814},
}

\bib{Milena}{article}{
AUTHOR ={Gonzalez, Jose}, 
AUTHOR ={Hering, Milena}, 
AUTHOR = {Payne, Sam},
AUTHOR = {S\"uss, Hendrik}
     TITLE = {Cox rings and pseudoeffective cones of projectivized toric
              vector bundles},
   JOURNAL = {Algebra Number Theory},
  FJOURNAL = {Algebra \& Number Theory},
    VOLUME = {6},
      YEAR = {2012},
    NUMBER = {5},
     PAGES = {995--1017},
}

\bib{GianJenMoon}{article}{
  AUTHOR = {Giansiracusa, Noah},
AUTHOR = {Jensen, David},
AUTHOR={Moon, Han-Bom},
     TITLE = {G{IT} compactifications of {$M_{0,n}$} and flips},
   JOURNAL = {Adv. Math.},
  FJOURNAL = {Advances in Mathematics},
    VOLUME = {248},
      YEAR = {2013},
     PAGES = {242--278},
}

\bib{GKM}{article}{
AUTHOR = {Gibney, Angela}, 
AUTHOR = {Keel, Sean},
AUTHOR = {Morrison, Ian},
     TITLE = {Towards the ample cone of {$\overline M_{g,n}$}},
   JOURNAL = {J. Amer. Math. Soc.},
  FJOURNAL = {Journal of the American Mathematical Society},
    VOLUME = {15},
      YEAR = {2002},
    NUMBER = {2},
     PAGES = {273--294},
}

\bib{GM}{article}{
AUTHOR = {Gibney, Angela} 
AUTHOR = {Maclagan, Diane},
     TITLE = {Equations for {C}how and {H}ilbert quotients},
   JOURNAL = {Algebra Number Theory},
  FJOURNAL = {Algebra \& Number Theory},
    VOLUME = {4},
      YEAR = {2010},
    NUMBER = {7},
     PAGES = {855--885},
      ISSN = {1937-0652},
}

\bib{GM_nef}{article}{
AUTHOR = {Gibney, Angela} 
AUTHOR = {Maclagan, Diane},
TITLE = {Lower and upper bounds for nef cones},
   JOURNAL = {Int. Math. Res. Not. IMRN},
  FJOURNAL = {International Mathematics Research Notices. IMRN},
      YEAR = {2012},
    NUMBER = {14},
     PAGES = {3224--3255},
}

\bib{GN_ams}{book}{
AUTHOR = {Goto, Shiro},
AUTHOR = {Nishida, Koji},
     TITLE = {The {C}ohen-{M}acaulay and {G}orenstein {R}ees algebras
              associated to filtrations},
      NOTE = {Mem. Amer. Math. Soc. {{\bf{110}}} (1994), no. 526},
 PUBLISHER = {American Mathematical Society},
   ADDRESS = {Providence, RI},
      YEAR = {1994},
     PAGES = {i--viii and 1--134},
}

\bib{GNW}{article}{	
AUTHOR = {Goto, Shiro},
AUTHOR = {Nishida, Koji},
AUTHOR = {Watanabe, Keiichi},
     TITLE = {Non-{C}ohen-{M}acaulay symbolic blow-ups for space monomial
              curves and counterexamples to {C}owsik's question},
   JOURNAL = {Proc. Amer. Math. Soc.},
  FJOURNAL = {Proceedings of the American Mathematical Society},
    VOLUME = {120},
      YEAR = {1994},
    NUMBER = {2},
     PAGES = {383--392},
}

\bib{HK}{article}{
    AUTHOR = {Hu, Yi},
    AUTHOR = {Keel, Sean},
     TITLE = {Mori dream spaces and {GIT}},      
   JOURNAL = {Michigan Math. J.},
  FJOURNAL = {Michigan Mathematical Journal},
    VOLUME = {48},
      YEAR = {2000},
     PAGES = {331--348},
       URL = {http://dx.doi.org/10.1307/mmj/1030132722},
}

\bib{HM}{article}{
 AUTHOR = {Harris, Joe},
AUTHOR={Mumford, David},
     TITLE = {On the {K}odaira dimension of the moduli space of curves},
      NOTE = {With an appendix by William Fulton},
   JOURNAL = {Invent. Math.},
  FJOURNAL = {Inventiones Mathematicae},
    VOLUME = {67},
      YEAR = {1982},
    NUMBER = {1},
     PAGES = {23--88},
}

\bib{Huneke}{article}{
 AUTHOR = {Huneke, Craig},
     TITLE = {Hilbert functions and symbolic powers},
   JOURNAL = {Michigan Math. J.},
  FJOURNAL = {The Michigan Mathematical Journal},
    VOLUME = {34},
      YEAR = {1987},
    NUMBER = {2},
     PAGES = {293--318}, 
}

\bib{Kapranov}{article}{
AUTHOR = {Kapranov, M. M.},
     TITLE = {Veronese curves and {G}rothendieck-{K}nudsen moduli space
              {$\overline M_{0,n}$}},
   JOURNAL = {J. Algebraic Geom.},
  FJOURNAL = {Journal of Algebraic Geometry},
    VOLUME = {2},
      YEAR = {1993},
    NUMBER = {2},
     PAGES = {239--262},
}

\bib{Keel}{article}{
AUTHOR = {Keel, Se{\'a}n},
     TITLE = {Basepoint freeness for nef and big line bundles in positive
              characteristic},
   JOURNAL = {Ann. of Math. (2)},
  FJOURNAL = {Annals of Mathematics. Second Series},
    VOLUME = {149},
      YEAR = {1999},
    NUMBER = {1},
     PAGES = {253--286},
      ISSN = {0003-486X},
}

\bib{Kiem}{article}{
  AUTHOR = {Kiem, Young-Hoon},
  title={Curve counting and birational geometry of compactified moduli spaces of curves},
Journal = {Proceedings of the Waseda symposium on algebraic geometry},  
year ={2010},
eprint={http://www.math.snu.ac.kr/~kiem/recentpapers.html},
}

\bib{Kurano-Matsuoka}{article}{
AUTHOR = {Kurano, Kazuhiko}, 
AUTHOR = {Matsuoka, Naoyuki},
     TITLE = {On finite generation of symbolic {R}ees rings of space
              monomial curves and existence of negative curves},
   JOURNAL = {J. Algebra},
  FJOURNAL = {Journal of Algebra},
    VOLUME = {322},
      YEAR = {2009},
    NUMBER = {9},
     PAGES = {3268--3290},
}

\bib{KM}{article}{
   AUTHOR = {Keel, Sean},
   AUTHOR = {M\textsuperscript{c}Kernan, James},  
   title={Contractible Extremal Rays on $\overline{M}_{0,n}$},
   eprint={arXiv:alg-geom/9607009v1}
   date={1996},
}

\bib{Larsen}{article}{
AUTHOR = {Larsen, Paul},
     TITLE = {Permutohedral spaces and the {C}ox ring of the moduli space of
              stable pointed rational curves},
   JOURNAL = {Geom. Dedicata},
  FJOURNAL = {Geometriae Dedicata},
    VOLUME = {162},
      YEAR = {2013},
     PAGES = {305--323},
      ISSN = {0046-5755},
}

\bib{Laz}{book}{ 
    AUTHOR = {Lazarsfeld, Robert},
     TITLE = {Positivity in algebraic geometry. {I}},
    SERIES = {Ergebnisse der Mathematik und ihrer Grenzgebiete. 3. Folge. A
              Series of Modern Surveys in Mathematics},
    VOLUME = {48},
 PUBLISHER = {Springer-Verlag},
   ADDRESS = {Berlin},
      YEAR = {2004},
}

\bib{LM}{article}{
AUTHOR = {Losev, A.},
AUTHOR= {Manin, Y.},
     TITLE = {New moduli spaces of pointed curves and pencils of flat
              connections},
   JOURNAL = {Michigan Math. J.},
  FJOURNAL = {The Michigan Mathematical Journal},
    VOLUME = {48},
      YEAR = {2000},
     PAGES = {443--472}
}
	
\bib{Mar}{article}{
  AUTHOR = {Maruyama, Masaki},
     TITLE = {Elementary transformations in the theory of algebraic vector
              bundles},
 BOOKTITLE = {Algebraic geometry ({L}a {R}\'abida, 1981)},
    SERIES = {Lecture Notes in Math.},
    VOLUME = {961},
     PAGES = {241--266},
 PUBLISHER = {Springer},
   ADDRESS = {Berlin},
      YEAR = {1982},
}

\bib{McK_survey}{article}{
AUTHOR = {McKernan, James},  
     TITLE = {Mori dream spaces},
   JOURNAL = {Jpn. J. Math.},
  FJOURNAL = {Japanese Journal of Mathematics},
    VOLUME = {5},
      YEAR = {2010},
    NUMBER = {1},
     PAGES = {127--151},
       URL = {http://dx.doi.org.proxy.lib.ohio-state.edu/10.1007/s11537-010-0944-7},
}

\bib{Okawa}{article}{
   author={Okawa, Shinnosuke},
   title={On images of Mori dream spaces},
   eprint={arXiv:1104.1326}
   date={2011},
}

\end{biblist}

\end{document}